\newcommand{\N}{{\mathbb N}}
\newcommand{\Z}{{\mathbb Z}}
\newcommand{\R}{{\mathbb R}}
\newcommand{\C}{{\mathbb C}}
\renewcommand{\ij}{\ensuremath i\!j}
\newtheorem{theorem}{Theorem}[section]
\newtheorem{proposition}[theorem]{Proposition}
\newtheorem{lemma}[theorem]{Lemma}
\newtheorem{definition}[theorem]{Definition}
\newtheorem{corollary}[theorem]{Corollary}
\newcommand{\edge}{\bullet\hspace{-.6ex}-\hspace{-.6ex}\bullet}
\newlength{\subfigureheight}
\title{Orthogonal ring patterns in the plane}
\author{Alexander I.~Bobenko, Tim Hoffmann, and Thilo R\"orig}
\address{Alexander I. Bobenko, Thilo R\"orig\vspace{-.5em}}
\address{Institute of Mathematics, Secr. MA 8-4, TU Berlin, 10623 Berlin, Germany\vspace{-.5em}}
\address{Email: {\normalfont \{bobenko,roerig\}@math.tu-berlin.de}}
\address{Tim Hoffmann\vspace{-.5em}}
\address{Dept. of Mathematics, TU Munich, 85748 Garching, Germany\vspace{-.5em}}
\address{Email: {\normalfont tim.hoffmann@ma.tum.de}}
\keywords{
  discrete differential geometry,
  circle patterns,
  variational principles
  }
\begin{document}

\begin{abstract}
  We introduce orthogonal ring patterns consisting of pairs of concentric
  circles generalizing circle patterns. 
  We show that orthogonal ring patterns are governed by the same equation as
  circle patterns.
  For every ring pattern there exists a one parameter family of patterns that
  interpolates between a circle pattern and its dual. 
  We construct ring patterns analogues of the Doyle spiral, Erf and $z^\alpha$
  functions.
  We also derive a variational principle and compute ring patterns based on
  Dirichlet and Neumann boundary conditions. 
\end{abstract}

\maketitle

\section{Introduction}
\label{sec:introduction}

The theory of circle patterns can be seen as a discrete version of conformal
maps. Schramm~\cite{schramm1997} has studied orthogonal circle patterns on
the~$\Z^2$-lattice, has proven their convergence to conformal maps and
constructed discrete analogs of some entire holomorphic functions. Circle
patterns are described by a variational
principle~\cite{bobenko_springborn_variational}, which is given in terms of
volumes of ideal hyperbolic polyhedra~\cite{hyperbolic_polyhedra}.  We
introduce orthogonal ring patterns that are natural generalizations of circle
patterns. Our theory of orthogonal ring patterns has its origin in discrete
differential geometry of S-isothermic cmc surfaces
\cite{bobenko_hoffmann_Scmc_2016}.  Recently, orthogonal double circle patterns
(ring patterns) on the sphere have been used to construct discrete
surfaces S-cmc by Tellier et al.~\cite{tellier_etal_2018}.  

We start Sect.~\ref{sec:ring_patterns} with a definition of orthogonal ring
patterns and their elementary properties. In particular we show that all rings
have the same area.  Our main Theorem~\ref{thm:ring_pattern} shows that ring
patterns are described by an equation for variables at the vertices.
Furthermore, each ring pattern comes with a natural $1$-parameter family of
patterns.  In Sect.~\ref{sec:circle_patterns} we show that as the area of the
rings goes to zero the ring patterns converge to orthogonal circle patterns.
In the following Sect.~\ref{sec:examples} we introduce ring patterns analogs of
Doyle spirals, the Erf function, $z^\alpha$ for $\alpha\in (0,2]$, and the
logarithm.  Finally, we introduce a variational principle to construct ring
patterns for given Dirichlet or Neumann boundary conditions.  A remarkable fact
that we explore is that the orthogonal ring and circle patterns in $ {\mathbb
R}^2$ are governed by the same integrable equation.  In a subsequent
publication we plan to develop a theory of ring patterns in a sphere and
hyperbolic space. They are governed by equations in elliptic functions that belong 
to the class of discrete integrable systems classified
in~\cite{adler_bobenko_suris_2003}.

\section{Orthogonal ring patterns}
\label{sec:ring_patterns}

In this section, we will introduce orthogonal ring patterns and show
that the existence of such the patterns is governed by the same equation as
the existence of orthogonal circle patterns.

We will consider cell complex~$G$ defined by a subset of the
quadrilaterals of the~$\Z^2$ lattice in~$\R^2$.  The vertices $V(G)$ of the complex~$G$
are indexed by $(m,n) \in \Z^2$ and denoted by $v_{m,n}$. %We assume that $G$ is simply connected. 
Each of its inner vertices has four neighbors, the vertices with less neighbors are called boundary vertices.
The vertices of the dual cell complex $G^*$ are identified with the 2-cells of $G$,
%small add to the version finalGD2
and the edges of $G^*$ correspond to the inner edges of $G$, i.e. to the edges shared by two neighboring 2-cells.
% and are indexed by $(m+\frac{1}{2},n+\frac{1}{2})$.
We assume that $G$ and $G^*$ are simply connected. 
%Thus any vertex of $G$ is adjacent to at least two edges and any edge of $G$ is adjacent to at least one quadrilateral.
The oriented edges
are given by pairs of vertices and are either \emph{horizontal} $(v_{m,n},
v_{m+1,n})$ or \emph{vertical} $(v_{m,n}, v_{m,n+1})$.

A \emph{ring} is a pair of two concentric circles in~$\R^2$ that form a
ring (annulus). We identify the vertices with the centers and denote the
inner circle and its radius by small letters~$c$ and $r$, and the outer circle
and its radius by capital letters $C$ and $R$.  We assign an orientation to the
ring by allowing $r$ to be negative: positive radius corresponds to
counter-clockwise and negative radius to clockwise orientation.  The outer
radius will always be positive. The area of a ring is given by
$(R^2-r^2)\pi$. Subscripts are used to associate circles and radii to vertices
of the complex, e.g., $c_{m,n}$ is the inner circle associated with the
vertex~$v_{m,n}$.

\begin{definition}[Orthogonal ring patterns]
  \label{def:ring_pattern}
  %Let~$G$ be a subcomplex of the~$\Z^2$-lattice defined by its quadrilaterals.
  An \emph{orthogonal ring pattern} consists of rings associated to the
  vertices of~$G$ satisfying the following properties:
  \begin{enumerate}
    \item \label{def:rp_intersection}
      The rings associated to neighboring vertices $v_i$ and $v_j$
      \emph{intersect orthogonally}, i.e., the outer circle $C_i$ of the one
      vertex intersects the inner circle $c_j$ of the other vertex orthogonally
      and vice versa (see Fig.~\ref{fig:circle_ring_pattern}, left).
    \item \label{def:rp_touch}
      In each square of~$G$ the inner circles $c_{m,n}$ and $c_{m+1,n+1}$ and
      the outer circles $C_{m,n+1}$ and $C_{m+1,n}$ pass through one point.
      (Then orthogonality implies that the two inner and the two outer circles
      touch in this point. see Fig.~\ref{fig:circle_ring_pattern}, center).
    \item \label{def:rp_orientation}
    For any ring $(C_{m,n}, c_{m,n})$ the four touching points $C_{m,n}\cap C_{m+1,n-1}$, $c_{m,n}\cap c_{m+1,n+1}$, $C_{m,n}\cap C_{m-1,n+1}$ and $c_{m,n}\cap c_{m-1,n-1}$ have the same orientation as~$c_{m,n}$, i.e., are in
      counter-clockwise order if $r_{m,n}$ is positive and in clockwise order
      if $r_{m,n}$ is negative.
  \end{enumerate}
\end{definition}

\begin{figure}[t]
  \centering
  \includegraphics[width=.9\linewidth]{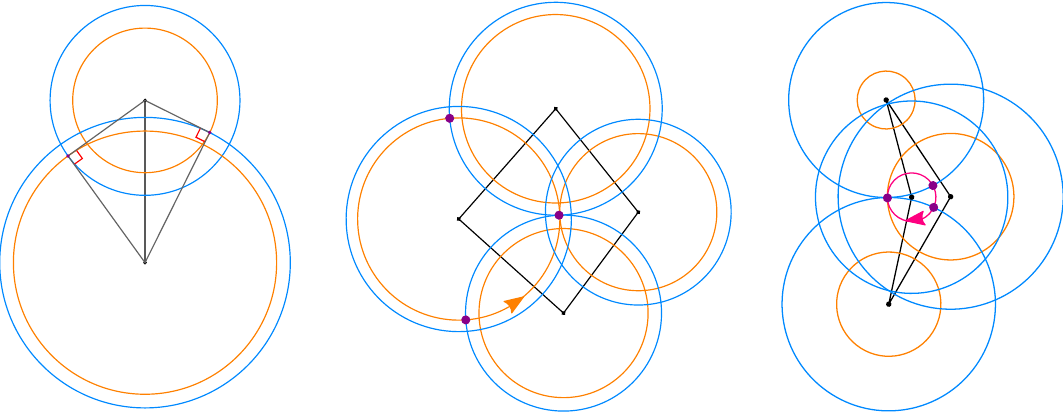}
  \caption{Left: Two orthogonally intersecting rings.
    Center: The inner circles touch along one diagonal of a quadrilateral and the
    outer circles along the other diagonal. The touching point coincides.
    Right: If the orientation (i.e., signed radii) of the inner circles differ,
    then the centers lie on the same side of the common tangent.
  }
  \label{fig:circle_ring_pattern}
\end{figure}

The orthogonal intersection of neighboring rings has the following implication
for their areas.

\begin{lemma}
  Consider two rings with radii $r_i,R_i$ and $r_j,R_j$ that 
  intersect orthogonally.
  Then the two rings have the same area.
\end{lemma}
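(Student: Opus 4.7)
The plan is to translate the orthogonal intersection condition into an algebraic identity on radii and distances, and then observe that it immediately forces the two areas to agree.

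First I would recall the standard fact that two circles of radii $\rho_1$ and $\rho_2$ whose centers are at distance $d$ intersect orthogonally if and only if $\rho_1^2 + \rho_2^2 = d^2$ (this follows from Pythagoras applied to the triangle formed by the two centers and an intersection point, at which the two radii are perpendicular tangent/normal directions). Since this condition only depends on the squared radii, the sign convention on the inner radii $r$ does not affect the computation.

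Next, I would apply this fact to the two orthogonality conditions that are built into Definition~\ref{def:ring_pattern}\eqref{def:rp_intersection}: the outer circle $C_i$ meets the inner circle $c_j$ orthogonally, and the outer circle $C_j$ meets the inner circle $c_i$ orthogonally. Writing $d = |v_i - v_j|$ for the distance between the two ring centers, these two conditions become
\begin{equation*}
  R_i^2 + r_j^2 \;=\; d^2 \;=\; R_j^2 + r_i^2.
\end{equation*}

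Rearranging the outer equality yields $R_i^2 - r_i^2 = R_j^2 - r_j^2$, and multiplying by $\pi$ gives that the two ring areas coincide. The only step requiring any thought is the orthogonality-Pythagoras identity, which is elementary, so I do not expect a real obstacle here; the main point of the lemma is that the two distinct orthogonality conditions (inner-to-outer in both directions) share the common distance $d$, and it is this sharing that equates the two areas.
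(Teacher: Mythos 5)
Your proposal is correct and follows exactly the paper's own argument: apply the Pythagorean orthogonality identity to both pairs (outer of one, inner of the other) sharing the common center distance $d$, obtaining $R_i^2 + r_j^2 = d^2 = R_j^2 + r_i^2$, and rearrange to equate the areas. No issues.
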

\begin{proof}
  By Pythagoras' Theorem the square of the distance~$d$ between the circle
  centers is $R_i^2 + r_j^2 = d^2 = r_i^2 + R_j^2$ 
  since the inner and outer circles are intersecting orthogonally. 
  This equation is equivalent to the equality of the ring
  areas $(R_i^2 - r_i^2)\pi = (R_j^2 - r_j^2)\pi$.
\end{proof}

The constant area allows us to use a single variable $\rho_i$ to express the
inner and the outer radii of the rings in the following way: 
Consider an orthogonal ring pattern with constant ring area $A_0 = \pi
\ell_0^2$, that is, for the radii $r_i, R_i$ of all vertices~$v_i \in V(G)$ we
have $R_i^2 - r_i^2 = \ell_0^2$.
Then for each vertex we can choose a single variable $\rho_i$ by setting 
\begin{equation}
  \label{eq:rho_radii}
  R_i = \ell_0 \cosh(\rho_i) 
  \quad \text{and} \quad 
  r_i = \ell_0 \sinh(\rho_i)\,.
\end{equation}

We will call those new variables~\emph{$\rho$-radii}. The orientation of the
rings is encoded in the sign of the $\rho$-radii.  
In Sect.~\ref{sec:circle_patterns} we consider the limit of orthogonal
ring patterns as the area goes to zero.  The $\rho$-radii become
the logarithmic radii of a Schramm type orthogonal circle pattern~\cite{schramm1997}
in the limit.

As in the case of orthogonal circle patterns there exist families of  
vertices $V_e = \{(m,n) \in \Z^2 \,|\, m + n \text{ even} \}$ and 
$V_o = \{(m,n) \in \Z^2 \,|\, m + n \text{ odd} \}$ such that all
rings along the diagonals touch (see Fig.~\ref{fig:touching_rings}).

\begin{figure}[tb]
  \centering
  \includegraphics[width=.6\linewidth]{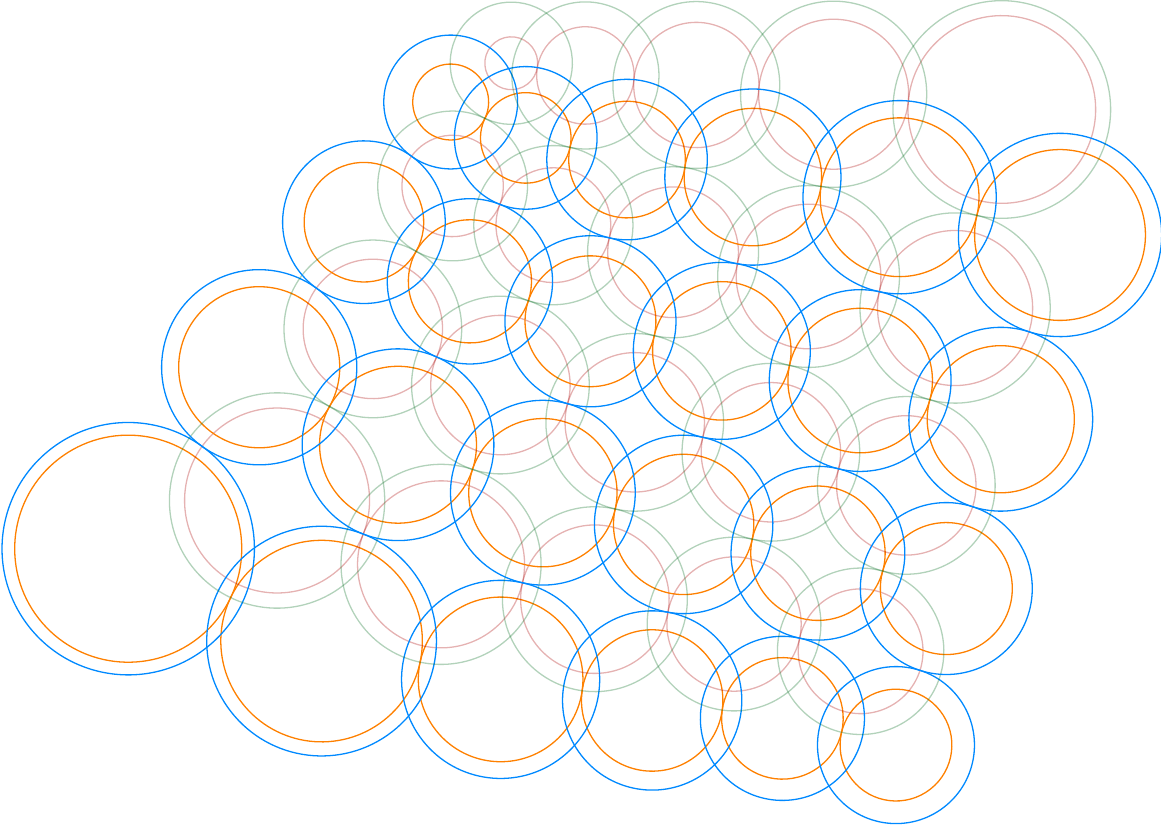}
  \caption{The rings of an orthogonal ring pattern partition into
  two diagonal families of touching rings.}
  \label{fig:touching_rings}
\end{figure}

Neighboring vertices of an orthogonal ring pattern define a cyclic
quadrilaterals of the following forms:

The circles $C_i, c_i$ and $C_j, c_j$ intersect in four points. 
Since the inner circle $c_i$ (resp.~$c_j$) and the outer circle $C_j$
(resp.~$C_i$) intersect orthogonally the centers of the circles and the
intersection points $c_i \cap C_j$ and $C_i \cap c_j$ lie on a circle.
We introduce four possible circular quadrilaterals,  shown in Fig.~\ref{fig:two_rings}, depending on the  
 orientation of the rings (i.e. on the sings of the $\rho$-radii). 
 Note that, the angle at the vertex~$v_i$ has the same sign as the
corresponding~$\rho_i$.

If $\rho_i=0$ the inner circle $c_i$ shrinks to its center and the cyclic quadrilateral defined 
by the rings $(C_i, c_i)$ and $(C_j, c_j)$ degenerates to a triangle
with a double vertex. The circle $C_j$ passes through this point.

Given the $\rho$-radii we can compute the angles in the cyclic quadrilaterals.
We will assume that the $\arctan$ function maps to oriented angles in
$(-\frac\pi2,\frac\pi2)$.

\begin{figure}[bt]
  \centering
  
  \begin{overpic}[width=\linewidth]{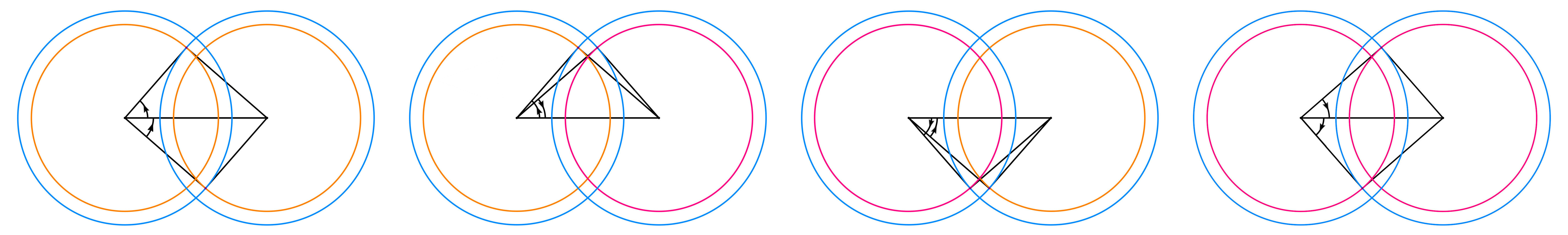}
    %\put(10,43){$0 > r_i$}
    %\put(23,25){$R_i$}
    %\put(55,45){$r_j > 0$}
    %\put(48,35){$R_j$}
  \end{overpic}
  \caption{
    Cyclic quadrilaterals defined by two orthogonally intersecting circle
    rings depending on the signs of the radii: \\
    (Left): $\rho_i>0,\rho_j > 0$, embedded quadrilateral, $\varphi_{\ij}>0$,\\
    (Center-Left): $\rho_i>0,\rho_j <0$, non-embedded quadrilateral, $\varphi_{\ij}>0$,\\
    (Center-Right): $\rho_i<0,\rho_j >0$, non-embedded quadrilateral, $\varphi_{\ij}<0$,\\
    (Right): $\rho_i<0,\rho_j <0$, embedded quadrilateral, $\varphi_{\ij}<0$.
  }
  \label{fig:two_rings}
\end{figure}

\begin{lemma}
  \label{lem:kite_angle}
  Let $v_i$ and $v_j$ be two neighboring vertices in an orthogonal ring
  pattern with $\rho$-radii $\rho_i$ and $\rho_j$.
  Then the angle at the vertex $v_i$ in the quadrilateral (triangle if $\rho_i=0$)
  defined by the two rings at $v_i$ and $v_j$
  %$(v_i, p , v_j, q)$ with 
  %$p \in c_i \cap C_j$ and $q \in c_j \cap C_i$ 
  is given by 
  \begin{equation}
  \label{eq:angle_ij}
    \varphi_{\ij} = 
    \begin{cases}
      \pi - 2 \arctan(e^{\rho_i - \rho_j}) & \text{if $\rho_i > 0$}\\
      \frac{\pi}{2}-2 \arctan(e^{- \rho_j}) & \text{if $\rho_i = 0$}\\
      - 2 \arctan(e^{\rho_i - \rho_j}) & \text{if $\rho_i < 0$}.
    \end{cases}
\end{equation}
\end{lemma}

\begin{proof}
For $\rho_i\neq 0$ the angle $\varphi_{\ij}$ is built by two angles of two rectangular triangles
$$
\varphi_{\ij}=\arg(1+i\frac{R_j}{r_i})+\arg(1+i\frac{r_j}{R_i})=\arg\left((1+i\frac{\cosh \rho_j}{\sinh \rho_i})(1+i\frac{\sinh \rho_j}{\cosh \rho_i})\right).
$$
Simple transformations of hyperbolic functions yield
\begin{eqnarray*}
\varphi_{\ij}&=&\arg \left(1- \frac{\sinh 2\rho_j}{\sinh 2\rho_i} +i\frac{2\cosh (\rho_i+\rho_j)}{\sinh 2\rho_i} \right)=\\
 & &\arg \left( \text{sign}\ (\rho_i) (\sinh (\rho_i-\rho_j)\cosh (\rho_i+\rho_j) +i\cosh (\rho_i+\rho_j)) \right)=\\
& &\arg \left( \text{sign}\ (\rho_i)  (i+\sinh (\rho_i-\rho_j))\right).
\end{eqnarray*}
Further, using 
$$
\arg(1+i\sinh x)=\arctan \sinh x= 2 \arctan e^x - \frac{\pi}{2},
$$
we arrive at the representations (\ref{eq:angle_ij}) for all $\rho_j$.

The angle $\varphi_{\ij}$ is discontinuous at $\rho_i=0$, and its value jumps by $\pi$:
$$
\varphi_{\ij}(\rho_i=0+)=\varphi_{\ij}(\rho_i=0-)+\pi .
$$
For $\rho_i=0$ the circle $c_i$ degenerates to a point located at the center of $C_i$, and the circle $C_j$ passes through this point. The quadrilateral degenerates to a triangle, and the angle of this triangle at the vertex $v_i$ is
$$
\varphi_{\ij}(\rho_i=0)=\arg(1+i\frac{r_j}{R_i})=\arg(1+i\sinh\rho_j)= \frac{\pi}{2}-2 \arctan(e^{- \rho_j}).
$$ 
\end{proof}

We define a cone angle at $v_i$ as the sum of the angles built by the ring centered at $v_i$ with all its neighbors:
$$
\Theta_i :=\sum_{j: v_j{\edge}v_i} \varphi_{\ij}.
$$ 
For interior vertices of an orthogonal ring pattern we have  
\begin{equation}
  \label{eq:Theta}
    \Theta_{i} = 
    \begin{cases}
      2\pi  & \text{if $\rho_i > 0$}\\
      0  & \text{if $\rho_i = 0$}\\
      - 2\pi & \text{if $\rho_i < 0$}.
    \end{cases}
\end{equation}
For a boundary vertex $\Theta_i>0$ if it is positively oriented $\rho_i > 0$, and $\Theta_i<0$ if it is negatively oriented $\rho_i < 0$.

%With the above angles we are able to prove the following theorem on orthogonal
%ring patterns.

\begin{theorem}[Orthogonal ring patterns]
  \label{thm:ring_pattern}
An orthogonal ring pattern $\mathcal{R}$ with simply connected $G$ and $G^*$ is uniquely determined by its $\rho$-radii function $\rho:V(G)\to {\mathbb R}$.   
  
A function $\rho:V(G)\to {\mathbb R}$ describes the $\rho$-radii of an orthogonal ring pattern on $G$ with the boundary cone angles $\Theta_i$ if and only if it satisfies:
 \begin{equation}
  \label{eq:closure}
    \sum_{j: v_j{\edge}v_i} 2 \arctan(e^{\rho_i - \rho_j}) = 
    \begin{cases}
      2\pi  & \text{for interiour vertices}\\
      \pi\ \rm{Val}(i)-\Theta_i & \text{for boundary vertex with $\rho_i > 0$}\\
      -\Theta_i & \text{for boundary vertex with $\rho_i < 0$}.
    \end{cases}
\end{equation} 
 Here the sum is taken over all neighboring vertices of  $v_i$, and $\rm{Val}(i)$ is the number of rings neighboring to the boundary ring $i$. 
\end{theorem}

\begin{proof}
The first claim of the theorem follows from the fact that a pair of orthogonal rings is determined by their $\rho$-radii uniquely up to Euclidean motion. Consequently laying the rings we obtain a simply connected ring pattern.

  Let $v_i\in V(G)$ be an interior vertex with four neighboring vertices $v_1, v_2, v_3$, and $v_4$. 
  The five rings form a flower in the pattern if and only if the angles $\varphi_{\ij}$ for 
  $j \in \{1,2,3,4\}$ sum up to $2 \pi$ (or $-2\pi$, depending on the orientation). 
 
 By Lemma~\ref{lem:kite_angle} for  positive $\rho_i$ the sum of the angles $\varphi_{\ij}$ around $v_i$ is $2\pi$ if
  \[
    2\pi 
    = \sum_{j=1}^4 \varphi_{\ij}
    = \sum_{j=1}^4 
      \pi - 2 \arctan(e^{\rho_i - \rho_j}).
  \]
  This is equivalent to~\eqref{eq:closure}.  
  For negative $\rho_i$  the other equation of Lemma~\ref{lem:kite_angle} also implies \eqref{eq:closure}.
  Hence we can assemble the four quadrilaterals and rings around the
  vertex~$v_i$ to form an orthogonal ring pattern. 
  As the complex~$G$ is simply connected the local proof suffices to prove that
  the entire complex~$G$ can be assembled to build an orthogonal ring
  pattern.
  
 The $\rho$-radii satisfy the same equation (\ref{eq:closure}) for the cases $\rho_i>0$ and $\rho_i<0$.
  This equation is also satisfied for $\rho_i=0$. This can be seen as the limit $\rho_i\to 0$ since the right hand side of (\ref{eq:closure}) is a continuous function of $\rho_i$. Alternatively, when the quadrilaterals degenerate to triangles the angles of the triangles at the vertex $v_i$ are given by (\ref{eq:angle_ij}) in the case $\rho_i=0$. Summing up around $v_i$ and using $\Theta_i=0$ we arrive at the same equation~\eqref{eq:closure}.

Formulas for the cone angles at the boundary rings follow directly from \eqref{eq:angle_ij}.
 \end{proof} 

The angle condition at the vertices of Thm.~\ref{thm:ring_pattern} only depends 
on the differences of the logarithmic radii.
So without violating equation~\eqref{eq:closure}, we can apply a shift $\rho \to
\rho^\delta = \rho + \delta$ by $\delta \in \R$ to the $\rho$-variables.

\begin{corollary}
  \label{cor:pattern_deformation}
  Consider an orthogonal ring pattern~$\mathcal{R}$ of area~$\pi$ for
  given $\rho$-radii $\rho_i$. 
  Then the $\rho$-radii $\rho_i^\delta = \rho_i + \delta$ define a one
  parameter family of orthogonal ring patterns~$\mathcal{R}^\delta$
  with radii:
  \begin{align*}
    r^{\delta}_i &= \sinh (\rho_i + \delta) \\
    R^{\delta}_i &= \cosh (\rho_i + \delta)
  \end{align*}
  and area $A^\delta = \pi$.
\end{corollary}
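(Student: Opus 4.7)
The plan is to leverage Theorem~\ref{thm:ring_pattern} almost directly, since both the hypothesis and the shape of equation~\eqref{eq:closure} are tailor-made for this corollary. First I would observe that the right-hand side of \eqref{eq:closure} involves only the differences $\rho_i - \rho_j$ between $\rho$-radii at neighboring vertices. Consequently, the substitution $\rho_{m,n}^\delta := \rho_{m,n} + \delta$ leaves every summand $2\arctan(e^{\rho_i - \rho_j})$ unchanged, so the closure condition at each interior vertex is automatically inherited by the shifted data.

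Having verified that $\{\rho_{m,n}^\delta\}$ still satisfies \eqref{eq:closure} on the (simply connected) subcomplex $G$, I would then invoke the converse direction of Theorem~\ref{thm:ring_pattern} to produce a unique orthogonal ring pattern $\mathcal{R}^\delta$ realizing these $\rho$-radii. By \eqref{eq:rho_radii} with $\ell_0 = 1$, the inner and outer circles of $\mathcal{R}^\delta$ must then have radii $r_i^\delta = \sinh(\rho_i+\delta)$ and $R_i^\delta = \cosh(\rho_i+\delta)$, which is exactly the formula claimed. The area of every ring is
\[
  \pi\bigl((R_i^\delta)^2 - (r_i^\delta)^2\bigr)
  = \pi\bigl(\cosh^2(\rho_i+\delta) - \sinh^2(\rho_i+\delta)\bigr) = \pi,
\]
so $A^\delta = \pi$ as required.

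The only point that deserves a moment of care is what happens when the shift $\delta$ sends some $\rho_i$ across zero, since the local orientation of the corresponding flower then flips and a ring passes through the degenerate configuration $r_i^\delta = 0$. This is the one place where I would expect a reader to pause, but it is not really an obstacle: the final paragraph of the proof of Theorem~\ref{thm:ring_pattern} already records that \eqref{eq:closure} remains the correct condition through such sign changes and vanishing radii, so the existence statement applies uniformly in $\delta \in \R$. Modulo this remark, the corollary is an immediate consequence of the translation invariance of \eqref{eq:closure} together with the existence half of Theorem~\ref{thm:ring_pattern}.
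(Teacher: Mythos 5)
Your proposal is correct and follows essentially the same route as the paper: the paper derives the corollary from the observation, stated immediately before it, that equation~\eqref{eq:closure} depends only on the differences $\rho_i-\rho_j$ and is therefore invariant under the shift $\rho\mapsto\rho+\delta$, after which the converse part of Theorem~\ref{thm:ring_pattern} yields the pattern and $\cosh^2-\sinh^2=1$ gives the constant area. Your additional remark about $\rho_i+\delta$ crossing zero is a sensible precaution and is likewise covered by the closing paragraph of the theorem's proof.
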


\section{Relation to orthogonal circle patterns}
\label{sec:circle_patterns}

In this section we give a detailed description of the relation of 
orthogonal ring patterns and orthogonal circle patterns.
It turns out that orthogonal circle patterns can be considered as a special
case of ring patterns with constant ring area~$A_0 = 0$.

To formulate the limit we need to review some properties of orthogonal circle patterns.
Two orthogonally intersecting circles in an orthogonal circle pattern create
a cyclic right angled kite (see Fig.~\ref{fig:kite_deformation} left and right).
The angle~$\varphi^\circ_{\ij}$ at a vertex $v_i$ in a kite on the edge $(v_i, v_j)$
of an orthogonal circle pattern with radii~$r^\circ_i = e^\rho_i$ is given by:
\begin{equation}
  \label{eq:cp_angle}
  \begin{aligned}
    \varphi^\circ_{\ij} 
    &= 2 \arctan(\frac{r^\circ_j}{r^\circ_i}) = 2 \arctan(e^{\rho_j-\rho_i})\\
    &= \pi - 2 \arctan(e^{\rho_i -\rho_j})
  \end{aligned}
\end{equation}
In case of circle patterns the $\rho$-radii are called \emph{logarithmic radii}.
Logarithmic radii of an immersed orthogonal circle pattern are governed by the same equation (cf.~\cite{schramm1997,
bobenko_springborn_variational}) as the $\rho$-radii of ring patterns (see
Thm.~\ref{thm:ring_pattern}).

Furthermore, for each orthogonal circle pattern~$\mathcal{C}$ with logarithmic
radii $\rho_i$ there exists a dual pattern~$\mathcal{C}^*$ with radii~$e^{-\rho_i}$. 
The angles of the dual pattern are given by
\[
  (\varphi^\circ_{\ij})^* = 2 \arctan(\frac{r^*_j}{r^*_i}) 
  = 2 \arctan(e^{-\rho_j+\rho_i}) = \pi - \varphi^\circ_{\ij}\,.
\]
Note that the angles at interior vertices still sum up to $2\pi$, but the
angles at the boundary vertices change as shown in
Fig.~\ref{fig:reciprocal_radii}.

\begin{figure}[tb]
  \centering
  \includegraphics[height=4cm]{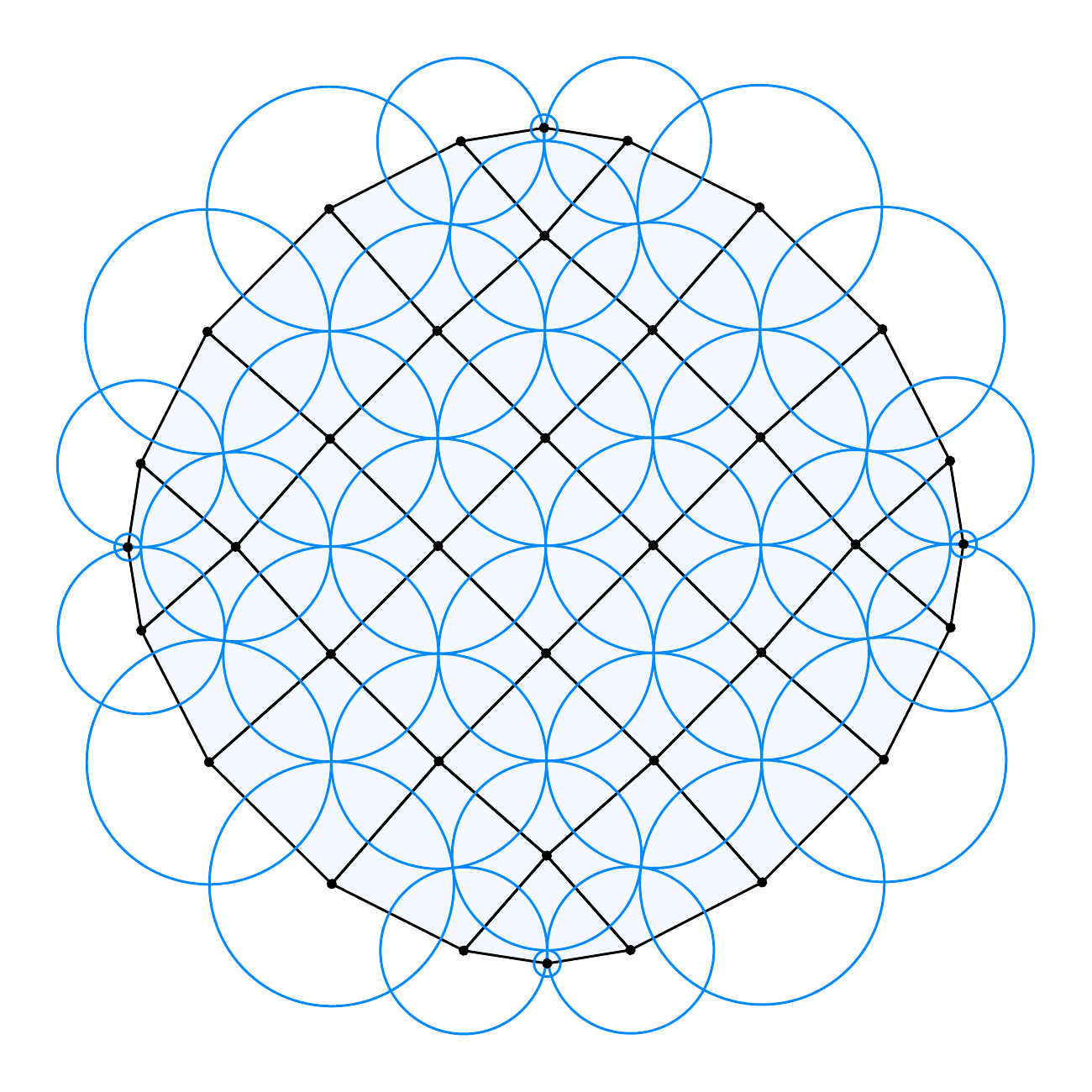}
  \hspace{1cm}
  \includegraphics[height=4cm]{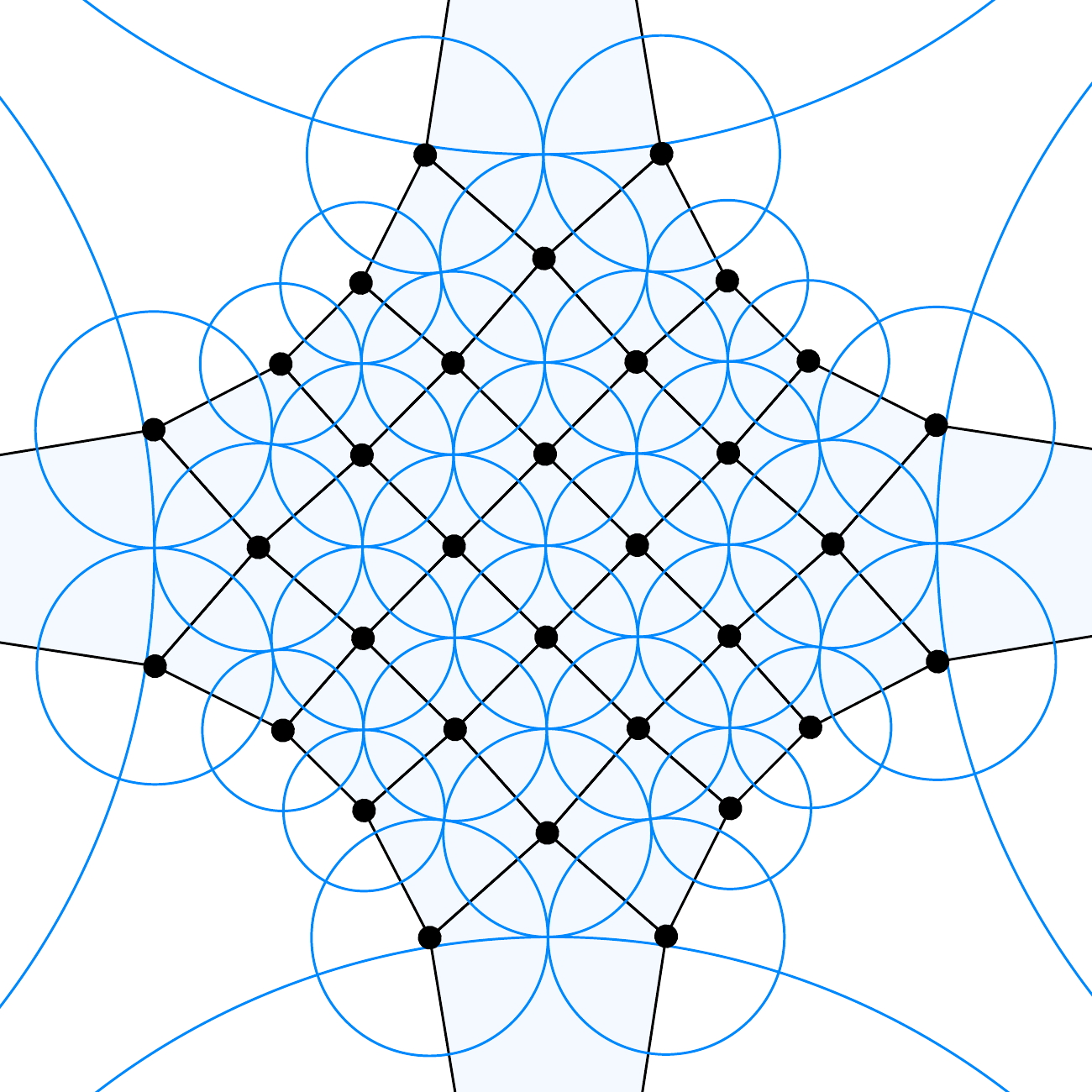}
  \caption{
    An orthogonal circle pattern and its dual. 
    The boundary angles the dual pattern are $2\pi - \varphi_{\ij}$
    resp.\ $\pi - \varphi_{\ij}$ depending on whether the degree of the
    boundary vertex is $3$ or $2$.
  }
  \label{fig:reciprocal_radii}
\end{figure}

Now let us go back to the one parameter family~$\mathcal{R}^\delta$ of ring
patterns defined in Cor.~\ref{cor:pattern_deformation}.
To avoid that the radii go to infinity as $\delta \to \pm \infty$ we scale the 
entire pattern by $2 e^{-|\delta|}$. 
So the radii of the one parameter family of ring patterns are:
\[
  r_{m,n}^\delta = 2 e^{-|\delta|} \sinh(\rho_{m,n} + \delta)
  \quad\text{and}\quad
  R_{m,n}^\delta = 2 e^{-|\delta|} \cosh(\rho_{m,n} + \delta).
\]
In the limit $\delta \to \pm \infty$ the areas of the rings tend to zero and
for the radii we have: 

\begin{align*}
  \lim_{\delta\to\pm\infty} r^\delta_i
  &= \lim_{\delta\to\pm\infty} 2 e^{-|\delta|} \frac{1}{2}(e^{\rho_i + \delta} - e^{- \rho_i - \delta})
  =\pm e^{\pm\rho_i}\,,\\
  \lim_{\delta\to\pm\infty} R^\delta_i
  &= \lim_{\delta\to\pm\infty} 2 e^{-|\delta|} \frac{1}{2}(e^{\rho_i + \delta} + e^{- \rho_i - \delta})
  = e^{\pm\rho_i}.
\end{align*}

{\textbf{Remark 3.0.}} (Limits on compact subsets).
If the ring pattern $\mathcal{R}$ is infinite we  consider the limits $\delta\to\pm\infty$ 
of the family $\mathcal{R}^\delta$ on any  compact subset $G_0\subset G$ satisfying the same conditions as $G$, i.e. $G_0$ and $G_0^*$ are simply connected.

{\textbf{Limit} $\delta \to +\infty$.} 
For $\delta > -\min_{v_i \in G_0} \rho_i$
we have $\rho_i^\delta = \rho_i + \delta > 0$ for all $v_i \in G_0$.  So
considering the limit as $\delta \to \infty$ all $\rho^\delta_i$ will be
positive and the angles of the circle pattern~$\mathcal{C}$
(equation~\eqref{eq:cp_angle}) are exactly those of the ring pattern
$\mathcal{R^\delta}$ given in Lemma~\ref{lem:kite_angle}.  Furthermore, for
$\delta\to\infty$, we obtain rings with area~$0$ since the outer and
inner radii both converge to~$e^{\rho_i}$. The neighboring circles intersect
orthogonally because inner and outer circles of the orthogonal ring pattern are
intersecting orthogonally in the entire one parameter family.  
%As the equation
%for the existence of orthogonal ring patterns and for the existence of
%orthogonal circle patterns coincide 
The limit circles form a Schramm type
orthogonal circle pattern.

{\textbf{Limit} $\delta \to -\infty$.}
For $\delta < -\max_{v_i \in G_0} \rho_i$ all $\rho_i^\delta =
\rho_i + \delta < 0$. By Lemma~\ref{lem:kite_angle}
the angles of the ring pattern for negative~$\rho_i$ are given by 
\[
  \varphi_{\ij} 
  = - 2\arctan (e^{\rho_i - \rho_j}) 
  = - \pi + \arctan(e^{(-\rho_i) - (-\rho_j)}) 
\] 
and correspond to the angles of the dual pattern~$\mathcal{C}^*$ with opposite
orientation. As equation~\eqref{eq:closure} is satisfied for all $\delta$, we obtain
the dual orthogonal circle pattern~$\mathcal{C}^*$ (with opposite orientation)
in the limit. 

\begin{corollary}
  Let~$\mathcal{R}^\delta$ be a one parameter family of orthogonal ring
  patterns with $\rho^\delta_i = \rho_i + \delta$ for $\rho_i \in \R$ as
  described in Cor.~\ref{cor:pattern_deformation}.  Then for $\delta \to
  +\infty$ we obtain an orthogonal circle pattern~$\mathcal{C}$ with logarithmic
  radii~$\rho_i$ and for $\delta \to -\infty$ we obtain the dual
  circle pattern~$\mathcal{C}^*$ with logarithmic radii~$-\rho_i$.
\end{corollary}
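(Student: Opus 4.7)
The plan is to formalize the computation already sketched in the paragraphs preceding the corollary. The argument proceeds in three stages: rescaling to avoid divergence, taking pointwise limits of the radii, and verifying that the limiting data satisfies the axioms of an orthogonal circle pattern (respectively its dual).

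First I would introduce the rescaled radii $r_i^\delta = 2e^{-|\delta|}\sinh(\rho_i+\delta)$ and $R_i^\delta = 2e^{-|\delta|}\cosh(\rho_i+\delta)$, noting that Cor.~\ref{cor:pattern_deformation} already guarantees we have a genuine ring pattern for every $\delta\in\R$ (the rescaling is a global similarity, hence preserves orthogonality and the combinatorial structure). A direct computation, exactly as written in the excerpt, gives $\lim_{\delta\to+\infty} r_i^\delta = \lim_{\delta\to+\infty} R_i^\delta = e^{\rho_i}$ and $\lim_{\delta\to-\infty} r_i^\delta = \lim_{\delta\to-\infty} R_i^\delta = e^{-\rho_i}$, so in both limits each ring degenerates to a single circle.

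For $\delta\to+\infty$, I would pick $\delta$ large enough that $\rho_i^\delta>0$ for every vertex of $G$ (possible since $G$ is finite in each local flower, or one argues flower-by-flower), so that Lemma~\ref{lem:kite_angle} gives $\varphi_{\ij}=\pi-2\arctan(e^{\rho_i-\rho_j})$, an expression independent of $\delta$ because of the shift invariance. Comparing with~\eqref{eq:cp_angle}, these are precisely the kite angles of an orthogonal circle pattern with logarithmic radii $\rho_i$. Since the rings intersect orthogonally for every $\delta$ and the inner and outer circles collapse to a common limit circle of radius $e^{\rho_i}$, the limit configuration is a Schramm-type orthogonal circle pattern $\mathcal{C}$; equation~\eqref{eq:closure}, which holds for every $\delta$, is exactly the closure condition for $\mathcal{C}$. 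For $\delta\to-\infty$, the same argument applies once all $\rho_i^\delta$ are negative: Lemma~\ref{lem:kite_angle} now gives $\varphi_{\ij}=-2\arctan(e^{\rho_i-\rho_j})=-(\pi-2\arctan(e^{-\rho_i-(-\rho_j)}))$, which are the kite angles of the circle pattern with logarithmic radii $-\rho_i$, but with reversed orientation, i.e. the dual pattern $\mathcal{C}^*$.

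The only subtle point is checking that the \emph{positions} of the circles also converge, not just the radii. I would handle this by observing that in each kite the distances between neighboring centers are determined by $R_i,r_i,R_j,r_j$ via Pythagoras ($d_{ij}^2=R_i^2+r_j^2$), hence converge to $\sqrt{e^{2\rho_i}+e^{2\rho_j}}$ (resp.\ $\sqrt{e^{-2\rho_i}+e^{-2\rho_j}}$), which is exactly the distance between neighboring centers in an orthogonal circle pattern with the claimed radii. Combined with the convergence of angles, an inductive layout argument starting from one vertex shows that the entire pattern converges to $\mathcal{C}$ (resp.\ $\mathcal{C}^*$), completing the proof. The main potential obstacle, namely the discontinuity of the angle formula across $\rho_i=0$ noted in Thm.~\ref{thm:ring_pattern}, is avoided because in both limits all $\rho_i^\delta$ have the same sign for $|\delta|$ large.
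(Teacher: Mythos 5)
Your proposal is correct and takes essentially the same route as the paper, which proves this corollary in the two paragraphs preceding its statement: rescaling by $2e^{-|\delta|}$, computing the pointwise limits of the radii, choosing $|\delta|$ large enough that all $\rho_i^\delta$ have a fixed sign, and matching the angles from Lemma~\ref{lem:kite_angle} with those of the circle pattern~\eqref{eq:cp_angle} and its dual. Your explicit verification that the centers also converge (via $d_{ij}^2 = R_i^2 + r_j^2$ and an inductive layout) is a detail the paper leaves implicit, but it does not change the argument.
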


Here the limits are understood in the sense of Remark 3.0.

For a better understanding of the deformation, 
%let us have a look at 
the one
parameter family of cyclic quadrilaterals associated to a single edge~$(v_i,
v_j)$ 
is
shown in Fig.~\ref{fig:kite_deformation}: Assume that $\rho_i$ and
$\rho_j$ are both positive and $\rho_i < \rho_j$.  Then the deformation starts
with an embedded cyclic quadrilateral (center right). For
$\delta \to \infty$ we obtain two orthogonally intersecting circles with radii
$e^{\rho_i}$ and $e^{\rho_j}$ that form a kite (bottom right).  When $\delta \searrow
-\rho_i$ one of the edges at $v_i$ shrinks to a point and reverses its direction as
$\rho_i + \delta$ changes its sign from ${+}$ to ${-}$. If $-\rho_j < \delta <
-\rho_i$ then $r_i^\delta < 0$ and we obtain a non-embedded quadrilateral (top center).
Again as $\delta \searrow -\rho_j$ one edge at $v_j$ shrinks to a point and
changes its direction as $\rho_j + \delta$ changes sign (center left) and we
obtain an embedded quadrilateral with negative orientation.
For $\delta \to -\infty$ the areas of the rings go to zero and we obtain two
orthogonally intersecting circles with radii $e^{-\rho_i}$ and $e^{-\rho_j}$
(bottom left).
%In the limit for $\rho \to \pm \infty$ we can consider the deformation of a
%point ($\rho = -\infty$) to a line ($\rho = \infty$).

\begin{figure}[bt]
  \centering
  \includegraphics[width=1.0\linewidth]{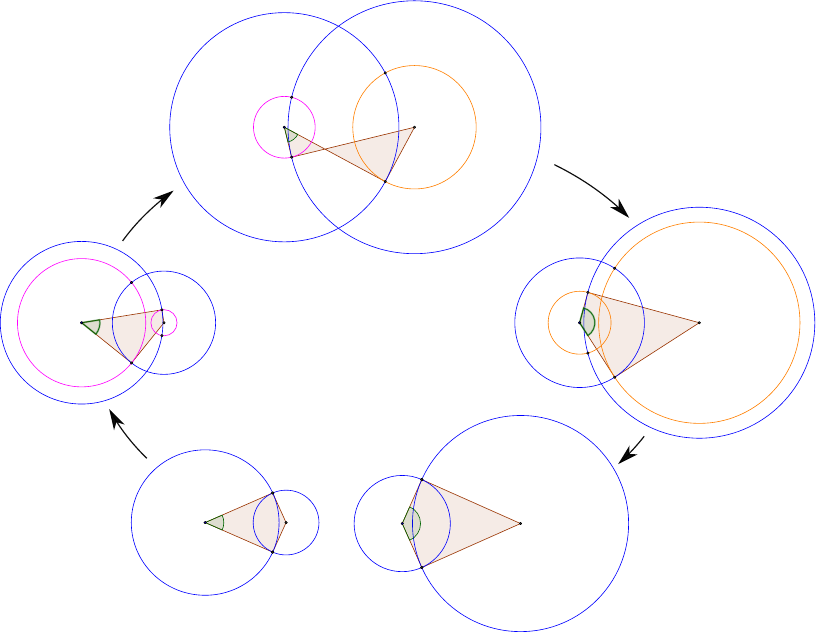}
  \caption{
    Deformation of a cyclic quadrilateral defined by two orthogonally
    intersecting rings. The bottom left and bottom right show the limits of
    the ring pattern as the area of the ring goes to zero. Positive
    radii are indicated by orange, negative radii (i.e., negative~$\rho$) are
    indicated by pink circles. The angle associated with the left vertex is
    shown in green.
  }%
  \label{fig:kite_deformation}
\end{figure}

\section{Doyle spiral, Erf, and $z^\alpha$ ring patterns}
\label{sec:examples}

In this section we will have a look at some known orthogonal circle patterns and
consider their ring pattern analogs and deformations.

\subsection{Doyle spirals}
\label{sec:doyle_spirals}

Doyle spirals for the square lattice have been constructed by
Schramm~\cite{schramm1997}.  For $x + iy \in \C
\setminus\{0\}$ Schramm defines radii by~$r_{ m,n } = |e^{(x+iy)(m + in)}|$.
Taking the logarithm we obtain the logarithmic radii $\rho_{m,n} = mx - ny$.
We will take these radii as a definition of the Doyle spiral ring pattern.

\begin{proposition}[Doyle spiral ring pattern]
  \label{prop:doyle_rp}
  Let $x+iy \in \C \setminus \{0\}$ be a complex number.
  The \emph{Doyle spiral ring pattern} is given by the $\rho$-radii
  $\rho_{ m,n } = mx - ny$ for $(m,n) \in \Z^2$.
\end{proposition}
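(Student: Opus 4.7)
The plan is to verify that the $\rho$-radii $\rho_{m,n}=mx-ny$ satisfy the closure equation~\eqref{eq:closure} of Theorem~\ref{thm:ring_pattern} at every interior vertex; the theorem then produces the orthogonal ring pattern as the unique assembly of the local flowers. Since $\Z^2$ may be exhausted by simply connected finite subcomplexes (for example growing rectangles), and since translation invariance of the $\rho$-differences makes the local monodromy trivial, this is enough to construct the pattern globally.

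The heart of the argument is a one-line computation. Because $\rho$ is affine linear in the indices, the four neighbor differences at an arbitrary interior vertex $v_{m,n}$ are independent of $(m,n)$ and equal
\[
\rho_{m,n}-\rho_{m\pm 1,n} = \mp x, \qquad \rho_{m,n}-\rho_{m,n\pm 1} = \pm y.
\]
Substituting into the right-hand side of~\eqref{eq:closure} gives
\[
\sum_{j:v_j\edge v_i} 2\arctan\bigl(e^{\rho_i-\rho_j}\bigr)
= 2\arctan(e^x)+2\arctan(e^{-x})+2\arctan(e^y)+2\arctan(e^{-y}).
\]
I would then invoke the elementary identity $\arctan(t)+\arctan(1/t)=\pi/2$, valid for every $t>0$. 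Applied with $t=e^x$ and $t=e^y$ (both positive), each of the two pairs contributes $\pi$, so the total equals $2\pi$, which is exactly~\eqref{eq:closure}.

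There is essentially no obstacle, and in fact the proof does not really use the specific linear form $\rho_{m,n}=mx-ny$; any assignment whose horizontal and vertical lattice differences are each constant will close up around every vertex by the same cancellation. The affine-linear formula is chosen so that in the circle-pattern limit $\delta\to\infty$ of Section~\ref{sec:circle_patterns} one recovers exactly Schramm's Doyle-spiral logarithmic radii, which motivates the name.
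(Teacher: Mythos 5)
Your proof is correct and follows the route the paper itself (implicitly) relies on: the proposition is really the claim that $\rho_{m,n}=mx-ny$ yields a ring pattern, which by Theorem~\ref{thm:ring_pattern} reduces to checking the closure equation~\eqref{eq:closure}, and your computation of the constant neighbor differences $\mp x$, $\pm y$ together with $\arctan(t)+\arctan(1/t)=\pi/2$ does exactly that. The paper states the proposition without a displayed proof and proceeds directly to the edge angles via Lemma~\ref{lem:kite_angle}, so your write-up simply makes explicit the verification the authors leave to the reader; your closing remark that only the constancy of the horizontal and vertical differences matters is accurate (and is in fact equivalent to affine linearity of $\rho$ up to the shift of Corollary~\ref{cor:pattern_deformation}).
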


Let us consider the generic case $\frac{x}{y} \not\in {\mathbb Q}$ when the $\rho$-radii do not vanish. By Lemma~\ref{lem:kite_angle} the angles of the cyclic quadrilaterals at the edges
are given by 
\begin{align*}
  \varphi_{(m,n),(m+1,n)} &= 
  \begin{cases}
    \pi - 2 \arctan(e^x) & \text{if $\rho_{m,n} > 0$}\\
    - 2 \arctan(e^{x}) & \text{if $\rho_{m,n} < 0$}
  \end{cases}
  \quad\text{and}\\
  \varphi_{(m,n),(m,n+1)} &= 
  \begin{cases}
    \pi - 2 \arctan(e^{-y}) & \text{if $\rho_{m,n} > 0$}\\
    - 2 \arctan(e^{-y}) & \text{if $\rho_{m,n} < 0$}
  \end{cases}
\end{align*}

Looking closer at the signs of the $\rho$-radii we observe that
\begin{align*}
  \rho_{m,n} > 0 &\Leftrightarrow mx > ny\, &\text{and}&&
  %\rho_{m,n} = 0 &\Leftrightarrow (m,n) = (0,0)\,\text{, and}\\
  \rho_{m,n} < 0 &\Leftrightarrow mx < ny. 
\end{align*}
So the signs of the $\rho$-radii change across the line $\{ (m,n) \in \Z^2
\,|\, mx = ny\}$ and hence does the orientation of the flowers. If we restrict
to the parts $\{ (m,n) \in \Z^2 \,|\, mx > ny\}$ (resp.\ $\{ (m,n) \in \Z^2
\,|\, mx < ny\}$) we see that the angles are constant for all horizontal edges
$(m,n)(m+1,n)$ and all vertical edges $(m,n)(m,n+1)$.  Thus we can define a
Doyle spiral ring pattern by two angles~$\alpha$ and $\beta$, one for
the horizontal and one for the vertical direction.
This is the characteristic property for the Doyle spiral circle pattern.

Consider the one parameter family~$\mathcal{R}^\delta$ of orthogonal %circle
ring patterns as described by Cor.~\ref{cor:pattern_deformation}.  The angles
along the horizontal and vertical edges stay constant in the two halfspaces.
As in the general case discussed in the previous section, all $\rho$'s become
positive for $\delta\to +\infty$ (resp.\ negative for $\delta\to-\infty$), see Remark 3.0, and we
obtain a Doyle spiral and its dual as constructed by Schramm (see
Fig.~\ref{fig:doyle_spiral_deformation}).

\begin{figure}[tb]
  \centering
  \includegraphics[width=.48\linewidth]{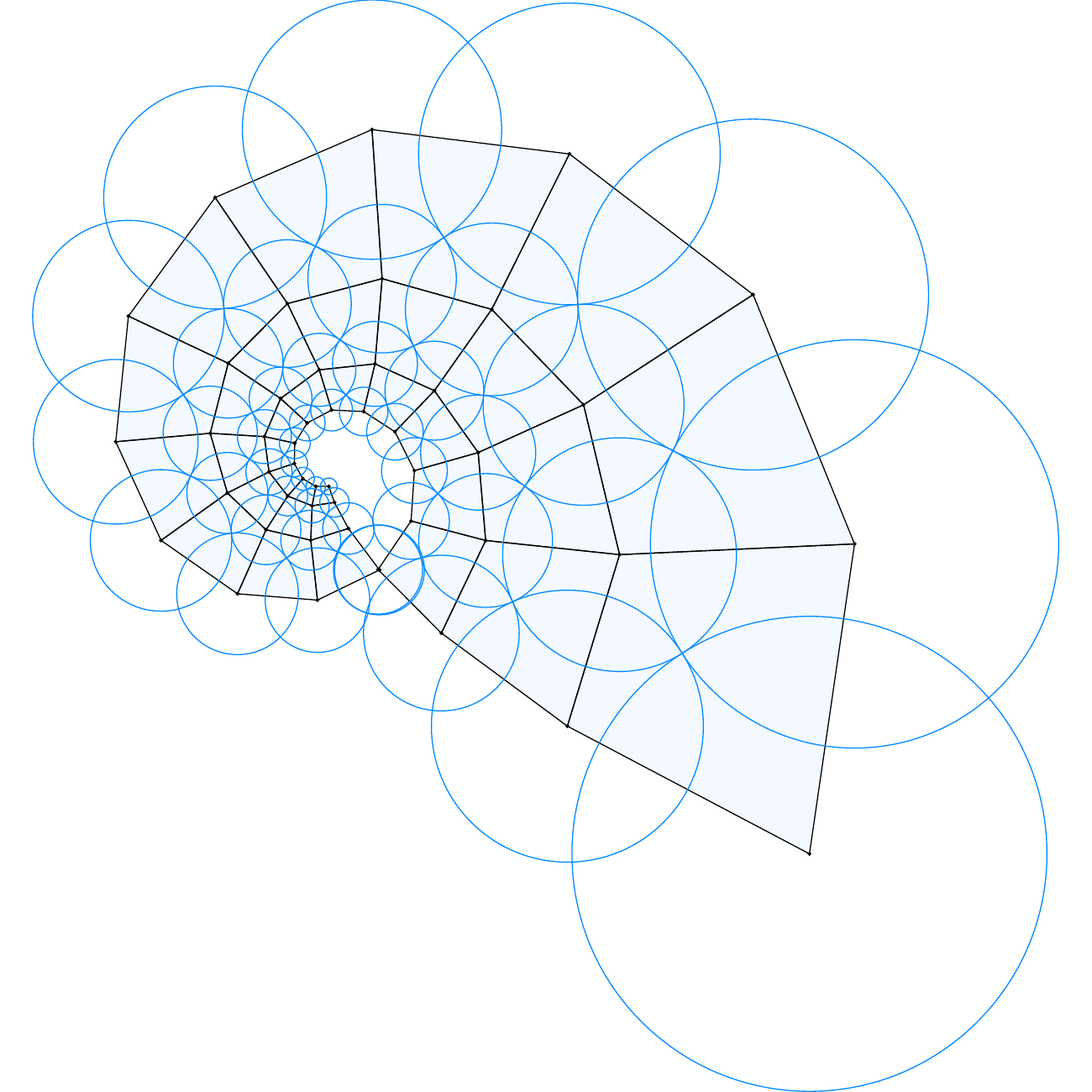}
  \includegraphics[width=.48\linewidth]{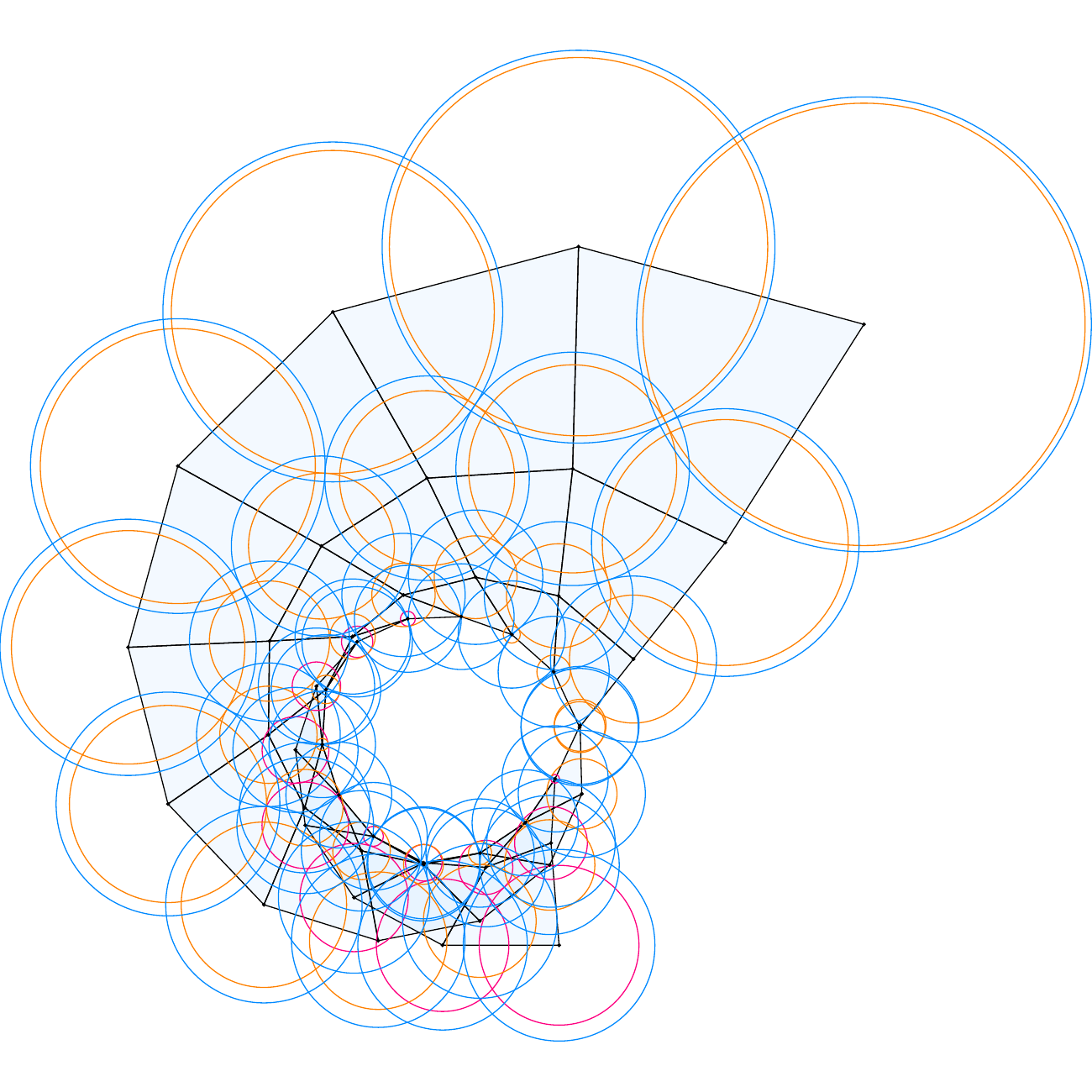}
  \includegraphics[width=.48\linewidth]{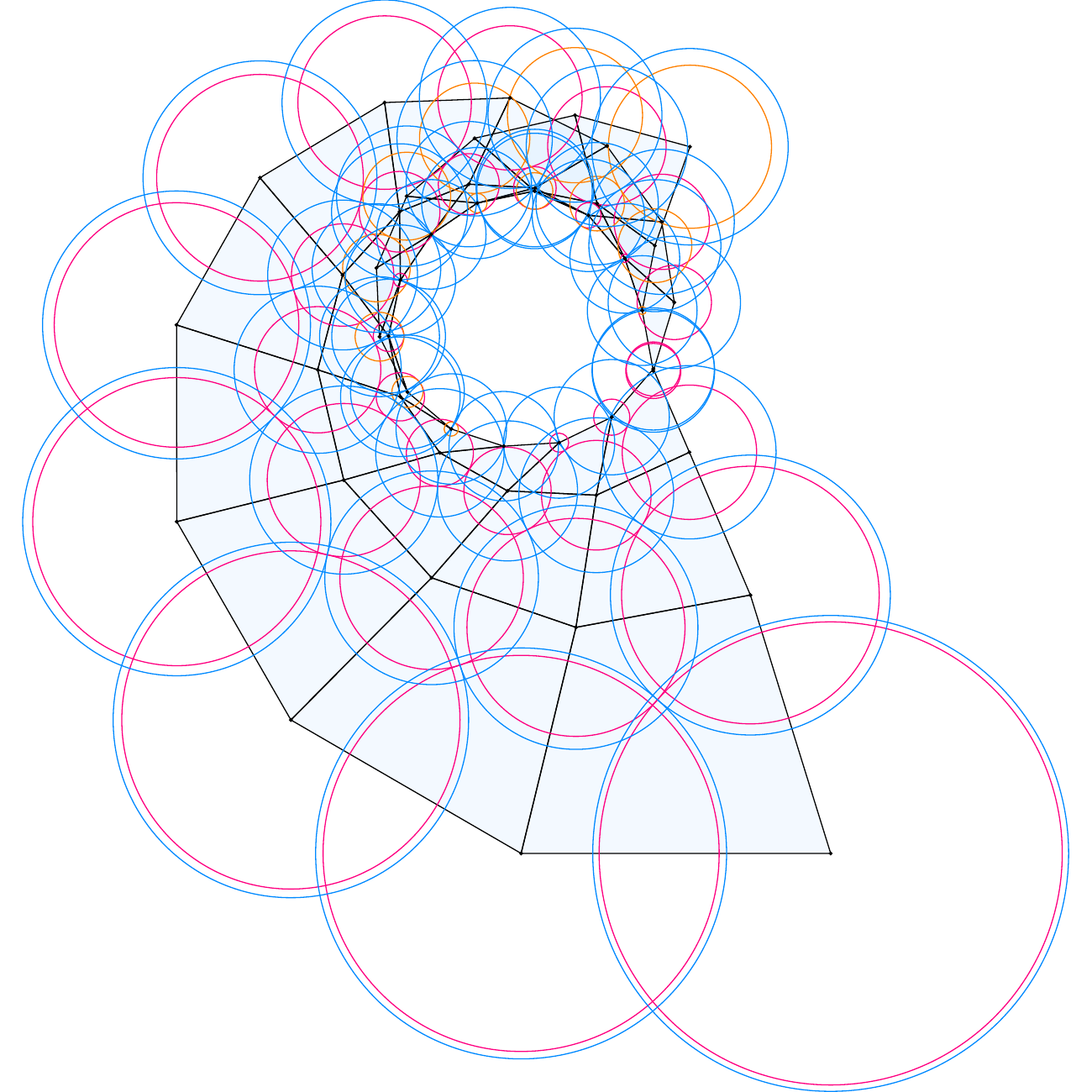}
  \includegraphics[width=.48\linewidth]{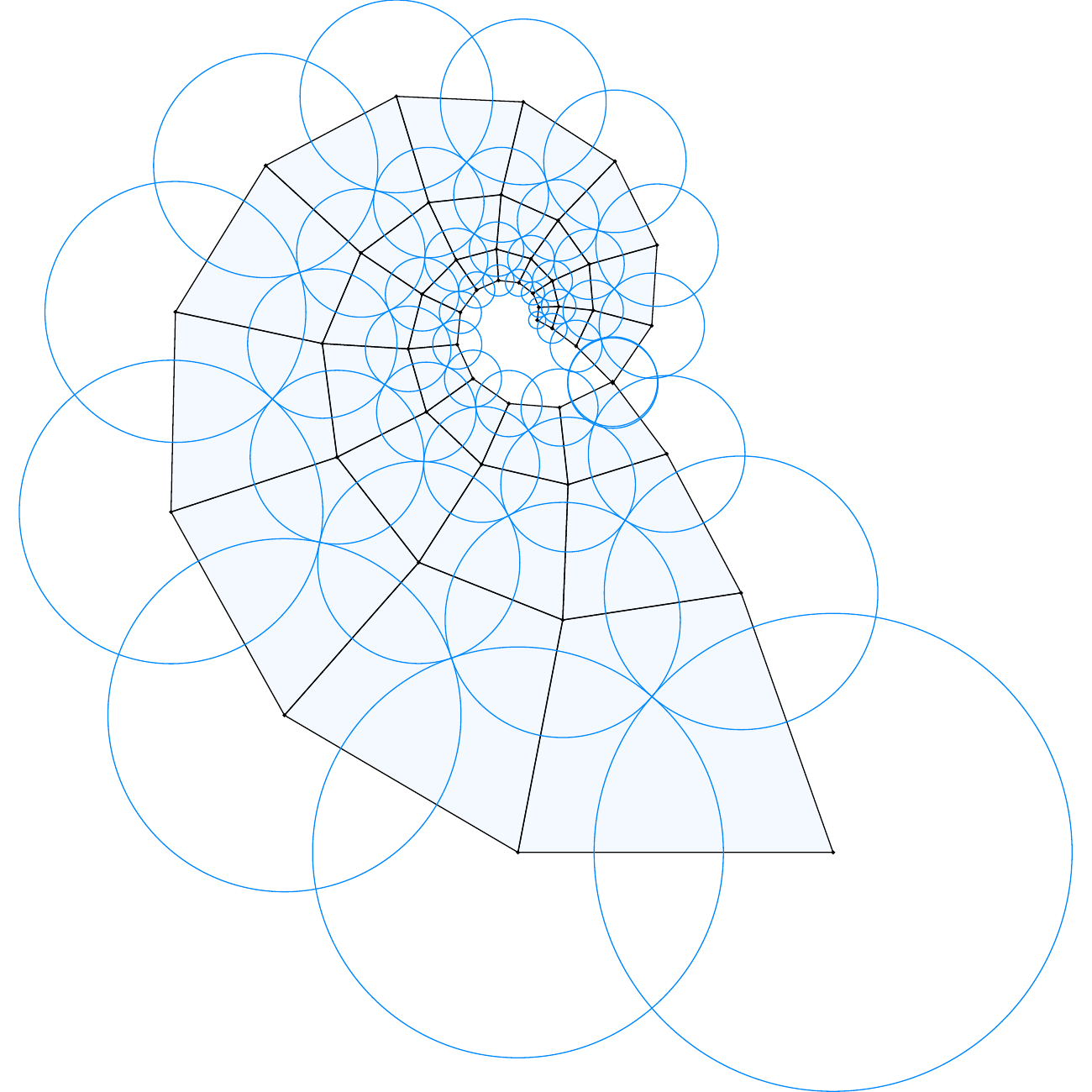}
  \caption{
    Deformation of an orthogonal circle pattern (top left) into its dual
    (bottom right) through a one parameter family of ring patterns (top
    right and bottom left). We see how the orientation of the quadrilaterals
    flips during the deformation. The innermost vertex in the top left circle
    patterns becomes the outermost vertex in the bottom right circle pattern.
  }%
  \label{fig:doyle_spiral_deformation}
\end{figure}

\subsection{Erf pattern}
\label{sec:erf}

For analogs to Schramm's $\sqrt{i}$-Erf pattern let us have a look at the
corresponding radius function given in~\cite{schramm1997} $ r_{m,n} =
e^{a m n}$ for $(m,n) \in \Z^2$ and $a \in \R$, $a > 0$.  Taking the
logarithm we obtain $\rho_{m, n} = a m n$.  As in case of the Doyle spiral we
will use this function to define the corresponding ring patterns.

\begin{proposition}[Erf ring pattern]
  \label{prop:erf_rp}
  Let $a \in \R, a > 0$.
  The \emph{Erf ring pattern} is given by the $\rho$-radii
  $\rho_{ m,n } = a m n $ for $(m,n) \in \Z^2$.
\end{proposition}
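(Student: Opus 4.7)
The goal is to justify that the choice $\rho_{m,n}=amn$ actually defines a ring pattern, and by Theorem~\ref{thm:ring_pattern} this reduces to verifying the closure equation~\eqref{eq:closure} at every interior vertex. So the plan is to fix a vertex $(m,n)\in\Z^2$, compute the four differences $\rho_{m,n}-\rho_{m',n'}$ to its four lattice neighbors, and show that the sum of $2\arctan(e^{\rho_i-\rho_j})$ comes out to $2\pi$.

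Concretely, at the vertex $(m,n)$ the four differences are
\begin{align*}
  \rho_{m,n}-\rho_{m+1,n} &= -an, & \rho_{m,n}-\rho_{m-1,n} &= +an,\\
  \rho_{m,n}-\rho_{m,n+1} &= -am, & \rho_{m,n}-\rho_{m,n-1} &= +am.
\end{align*}
The differences across opposite edges are negatives of each other, so the neighbors pair up. Using the elementary identity $\arctan(t)+\arctan(1/t)=\pi/2$ for $t>0$ applied with $t=e^{an}$ and $t=e^{am}$, one gets
\[
  2\arctan(e^{an})+2\arctan(e^{-an}) = \pi,\qquad
  2\arctan(e^{am})+2\arctan(e^{-am}) = \pi,
\]
and adding these two gives exactly $2\pi$, which is~\eqref{eq:closure}. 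Since this holds for every $(m,n)$, Theorem~\ref{thm:ring_pattern} then produces the claimed ring pattern on any simply connected subcomplex (and the local assembly argument in its proof extends this to the full $\Z^2$-lattice).

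There is essentially no obstacle; the only subtlety worth flagging is the sign convention for $\rho_{m,n}=amn$, which changes across the axes $m=0$ and $n=0$ and therefore flips the orientation of flowers in the four quadrants (as happens already for the Doyle example of Prop.~\ref{prop:doyle_rp}). But this is harmless, since the closure equation~\eqref{eq:closure} in Theorem~\ref{thm:ring_pattern} was shown to hold uniformly for positive, negative and vanishing $\rho_i$, and the computation above does not use the sign of $\rho_{m,n}$ at all. Thus the verification of~\eqref{eq:closure} is the whole content of the proof.
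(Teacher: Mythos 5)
Your verification is correct and is essentially the paper's (implicit) argument: the paper likewise reduces the claim to the closure equation via Theorem~\ref{thm:ring_pattern} and Lemma~\ref{lem:kite_angle}, with the neighbor differences $\pm an$, $\pm am$ pairing up so that $2\arctan(e^{an})+2\arctan(e^{-an})+2\arctan(e^{am})+2\arctan(e^{-am})=2\pi$. Your remark about the sign change of $\rho_{m,n}=amn$ across the coordinate axes flipping the orientation of flowers also matches the paper's discussion following the proposition.
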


The angles in the pattern are given by
\begin{align*}
  \varphi_{(m,n),(m+1,n)} &= 
  \begin{cases}
    \pi - 2 \arctan(e^{-an}) & \text{if $\rho_{m,n} > 0$}\\
    - 2 \arctan(e^{-an}) & \text{if $\rho_{m,n} < 0$}
  \end{cases}
  \quad\text{and}\\
  \varphi_{(m,n),(m,n+1)} &= 
  \begin{cases}
    \pi - 2 \arctan(e^{-am}) & \text{if $\rho_{m,n} > 0$}\\
    - 2 \arctan(e^{-am}) & \text{if $\rho_{m,n} < 0$}
  \end{cases}
\end{align*}
As $\rho_{m,n} = amn$ the $\rho$-radii change signs at the coordinate axes.
In the four quadrants, the angles along the horizontal and the vertical
parameter lines are constant. All the rings on the coordinate axes are congruent: the radii of their outer circles are equal to $R=\cosh 0=1$, and their inner circles degenerate to their centers.

If we consider the one parameter family of ring patterns defined in
Cor.~\ref{cor:pattern_deformation} we see that in the limit $\delta\to +\infty$
we obtain the $\sqrt{i}$-SG Erf circle patterns constructed by Schramm.
For $\delta\to-\infty$ we obtain a pattern with $\rho^*_{m,n} = -amn$. This is 
the same pattern as for $a$ since $\rho^*_{m,n} = \rho_{-m,n}$.

\begin{figure}[tb]
  \centering
  \includegraphics[width=\linewidth]{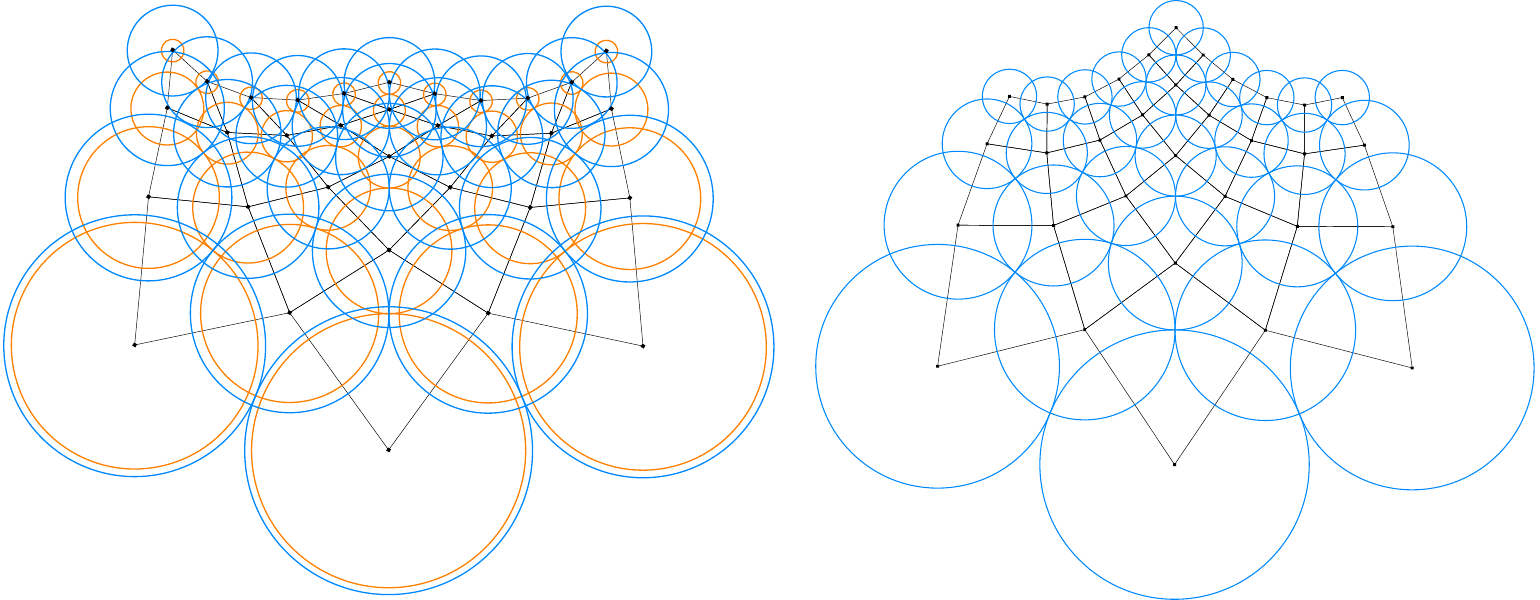}
  \caption{An Erf ring pattern (left) and the corresponding limit 
  circle pattern.}
  \label{fig:erf_pattern}
\end{figure}

\subsection{$z^\alpha$ and logarithm patterns}
\label{ssec:zalpha}
\begingroup
\newcommand{\mm}{m}
\newcommand{\nn}{n}

In \cite{agafonov_bobenko_zalpha} the authors defined an orthogonal circle
pattern~$\mathcal{C}(z^\alpha)$ as a discretization of the complex map~$z
\mapsto z^\alpha$ for $\alpha \in (0, 2)$.  The radius function of the circle
pattern is given by the
following identities (cf. \cite[Thm.~3,
equation~(10, 11)]{agafonov_bobenko_zalpha}) on a subset of~$\Z^2$ given by
$V = \{(\mm,\nn) \,|\, \mm \geq |\nn|\}$:
\begin{multline*}
  r_{\mm,\nn} r_{\mm+1,\nn}(-2\nn-\alpha) + r_{\mm+1,\nn} r_{\mm+1,\nn+1} (2(\mm+1)-\alpha)\\
  +r_{\mm+1,\nn+1}r_{\mm,\nn+1}(2(\nn+1)-\alpha) + r_{\mm,\nn+1}r_{\mm,\nn}(-2\mm-\alpha) = 0
\end{multline*}
for $V \cup \{(-\mm,\mm-1) \,|\, \mm \in \N \}$ and
\begin{multline*}
  (\mm + \nn)(r_{\mm,\nn}^2 - r_{\mm+1,\nn}r_{\mm,\nn-1})(r_{\mm,\nn+1} + r_{\mm+1,\nn}) \\
  + (\nn - \mm)(r_{\mm,\nn}^2 - r_{\mm,\nn+1} r_{\mm+1,\nn})(r_{\mm+1,\nn}+r_{\mm,\nn-1}) = 0
\end{multline*}
for interior vertices $V \setminus \{(\pm \mm, \mm) \,|\, \mm \in \N\}$ 
with initial condition~$r_{0,0} =1 $ and $r_{1,0}=r_{0,1} = \tan \frac{\alpha\pi}{4}$.

It is known that the dual pattern of $z^\alpha$ is given by $z^{2-\alpha}$,
e.g., the dual circle pattern of $\mathcal{C}(z^{2/3})$ is
$\mathcal{C}(z^{4/3}) = (\mathcal{C}(z^{2/3}))^*$ shown in
Fig.~\ref{fig:z23_43} (top left and bottom right).  Based on the logarithmic
radii of these patterns we construct a one parameter family of ring patterns
that interpolates between the two patterns.

\begin{figure}[t]
  \centering
  \includegraphics[width=.49\linewidth]{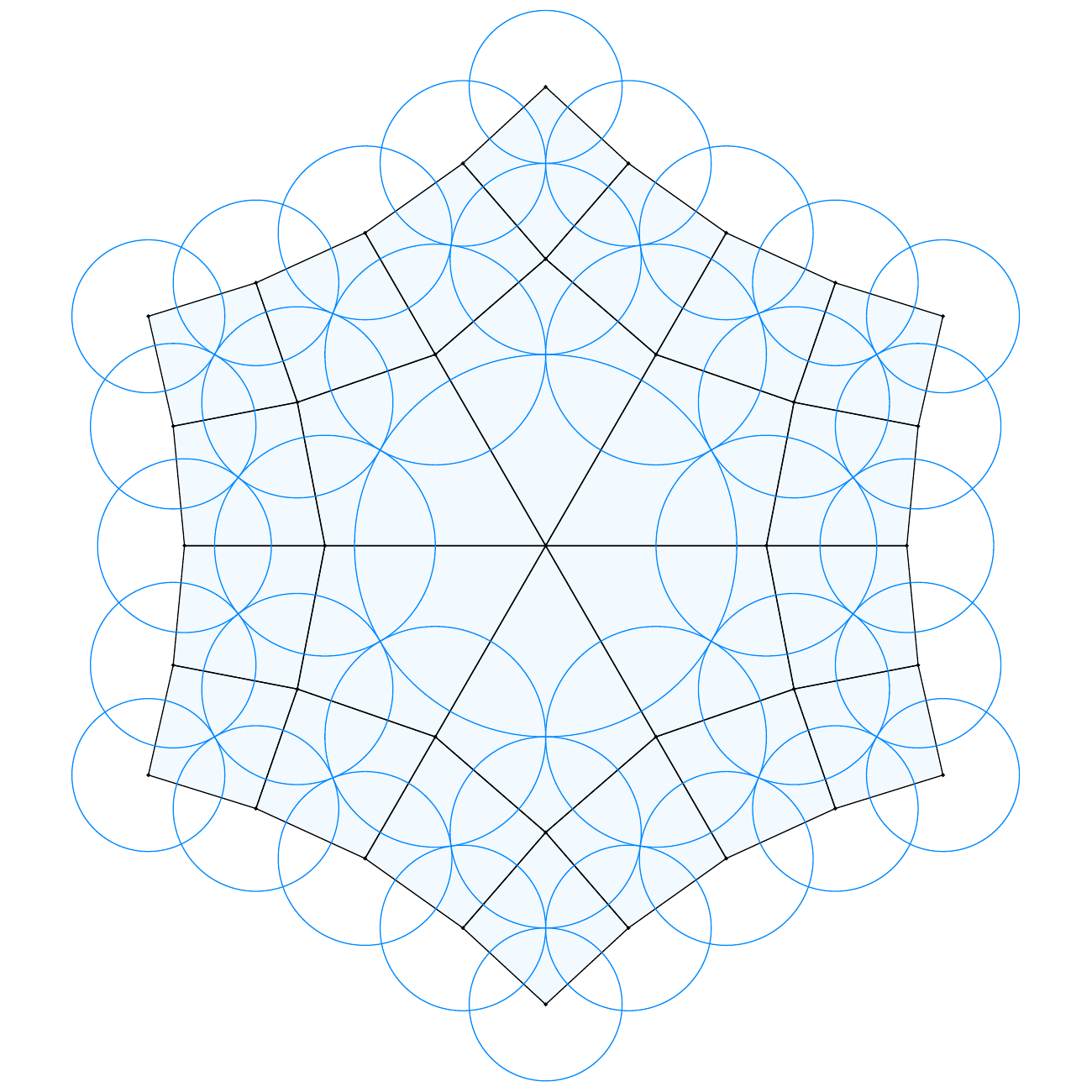}
  \includegraphics[width=.49\linewidth]{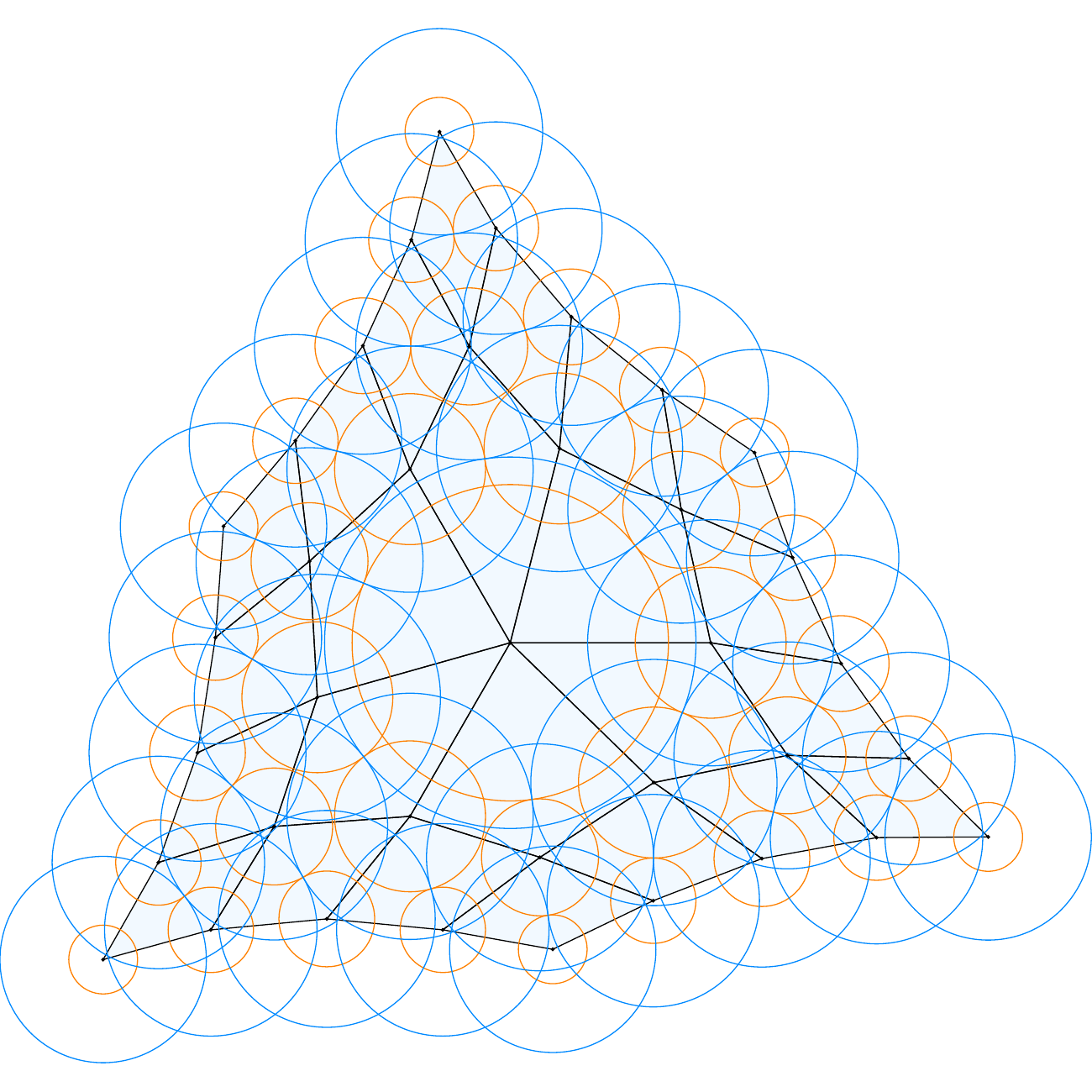}

  \includegraphics[width=.49\linewidth]{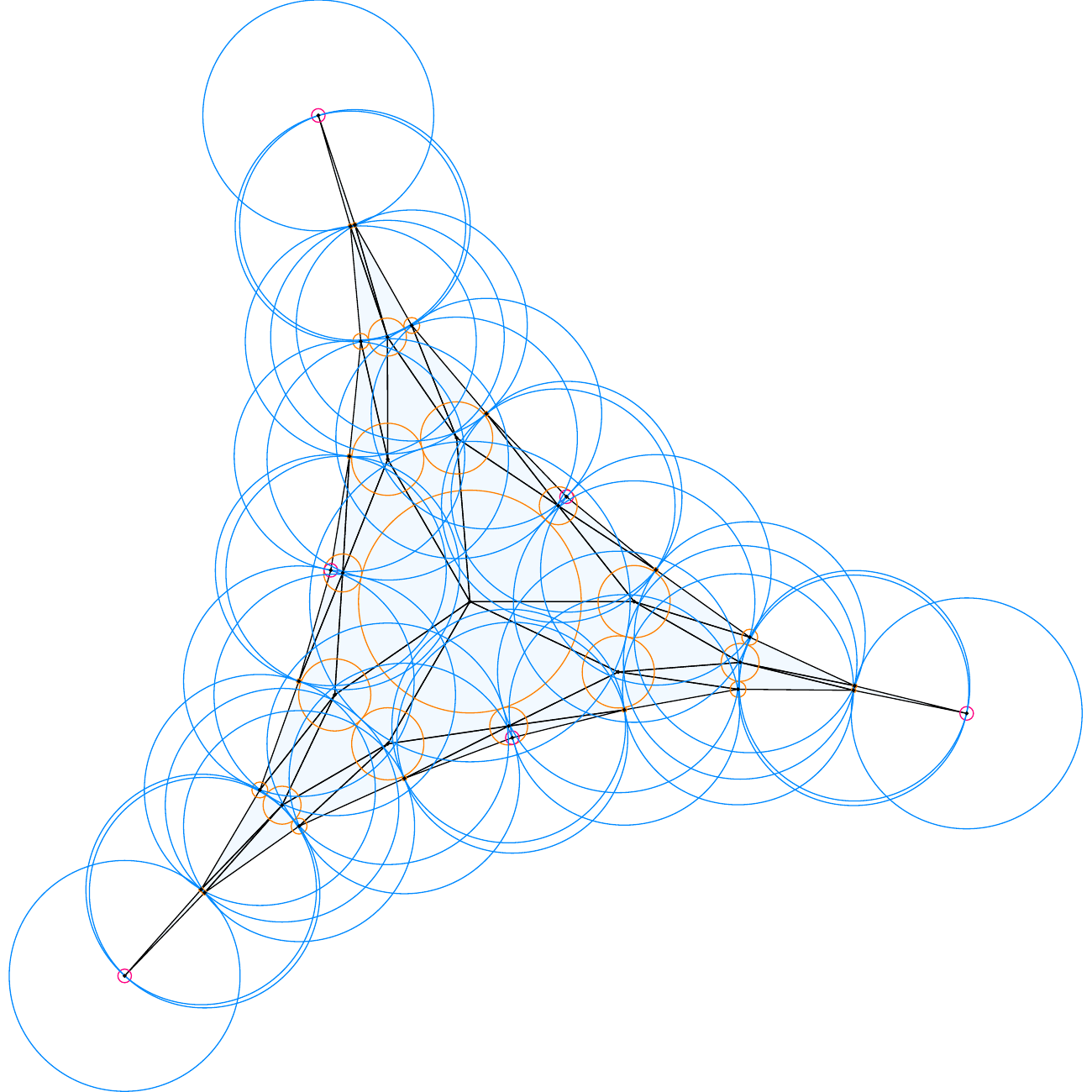}
  \includegraphics[width=.49\linewidth]{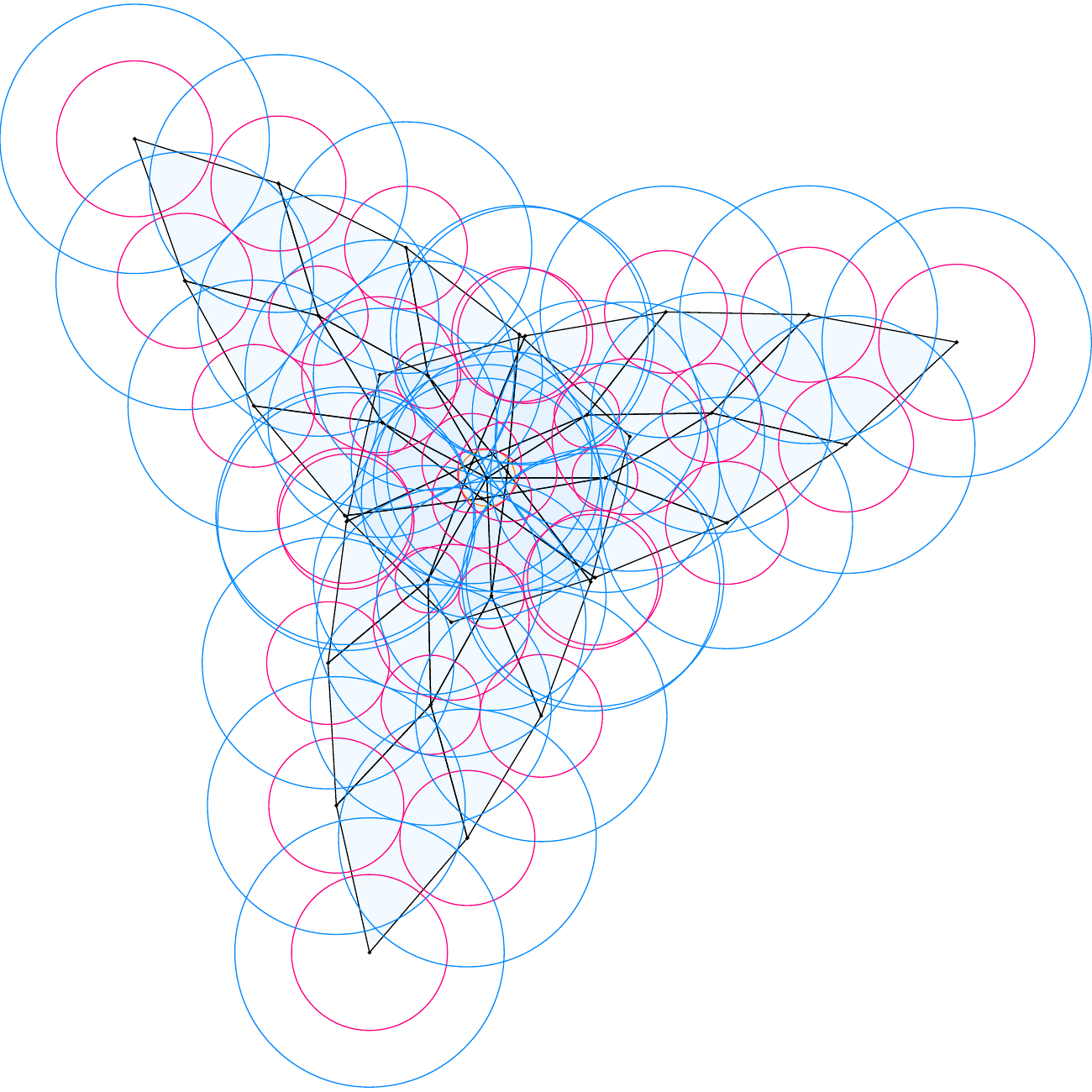}

  \includegraphics[width=.49\linewidth]{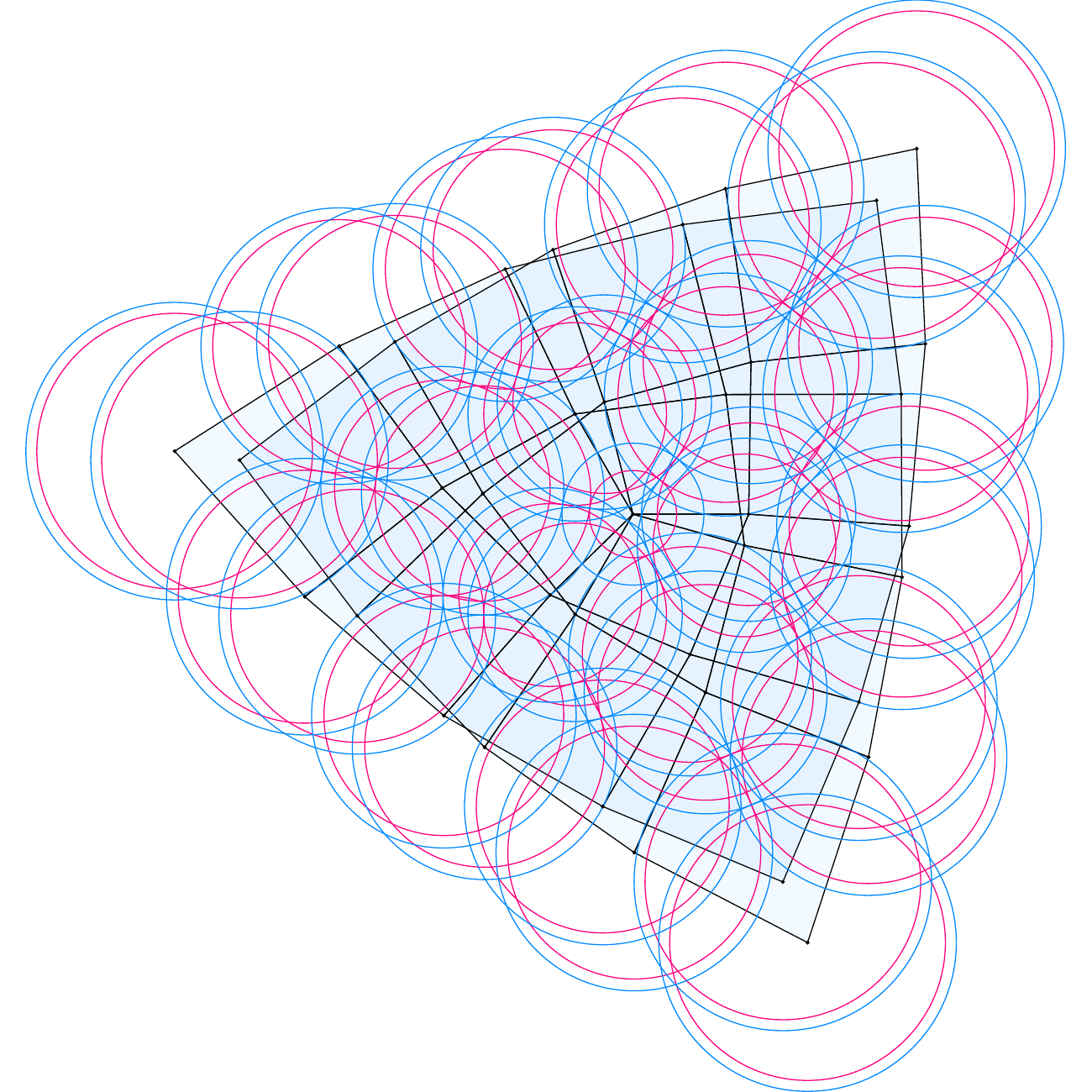}
  \includegraphics[width=.49\linewidth]{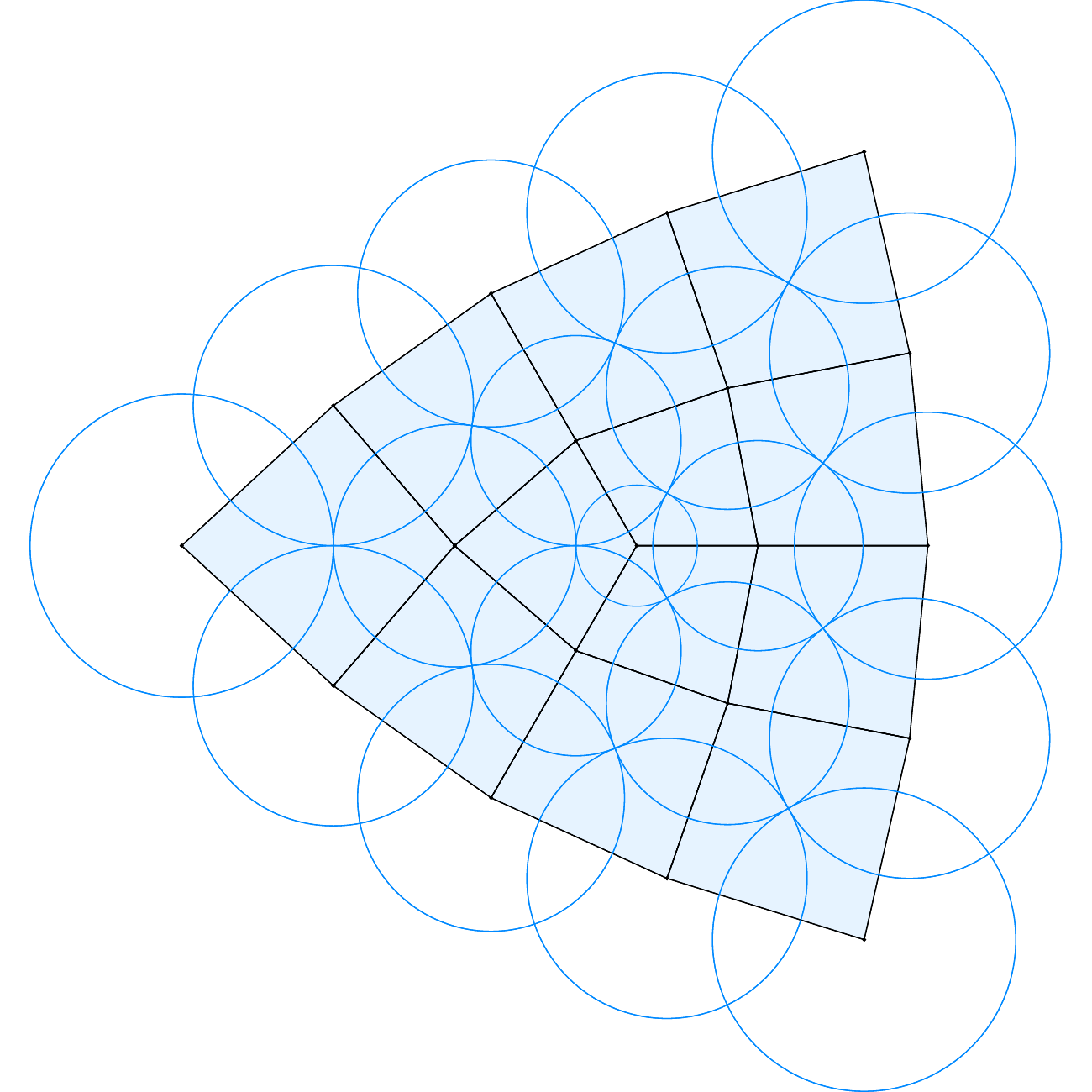}
  \caption{One parameter family of orthogonal ring patterns interpolating
    between the orthogonal circle pattern for $z \mapsto z^{2/3}$ (top left) and the
    dual pattern for $z^{4/3}$ (bottom right)}
  \label{fig:z23_43}
\end{figure}

An orthogonal circle pattern for~$z^2$ can be defined by considering a special
limit for $\alpha \to 2$. The radii of the~$z^2$ pattern are defined
in~\cite[Sect.~5]{agafonov_bobenko_zalpha}.  The dual of $z^2$ is the logarithm
map~$\log z$.  In each of the corresponding orthogonal circle patterns, one of
the circles degenerates.  In case of $z^2$ one of the circles has radius~$0$,
i.e., the circle degenerates to a point and the logarithmic radius is negative
infinity.  Consequently, one of the circles in the $\log z$ pattern has radius
infinity, i.e., the circle degenerates to a line and the logarithmic radius is
positive infinity.  We illustrate the one parameter deformation of~$z^2$ to
$\log(z)$ in Fig.~\ref{fig:z2_log}.

\begin{figure}[tb]
  \centering
  \includegraphics[width=.48\linewidth]{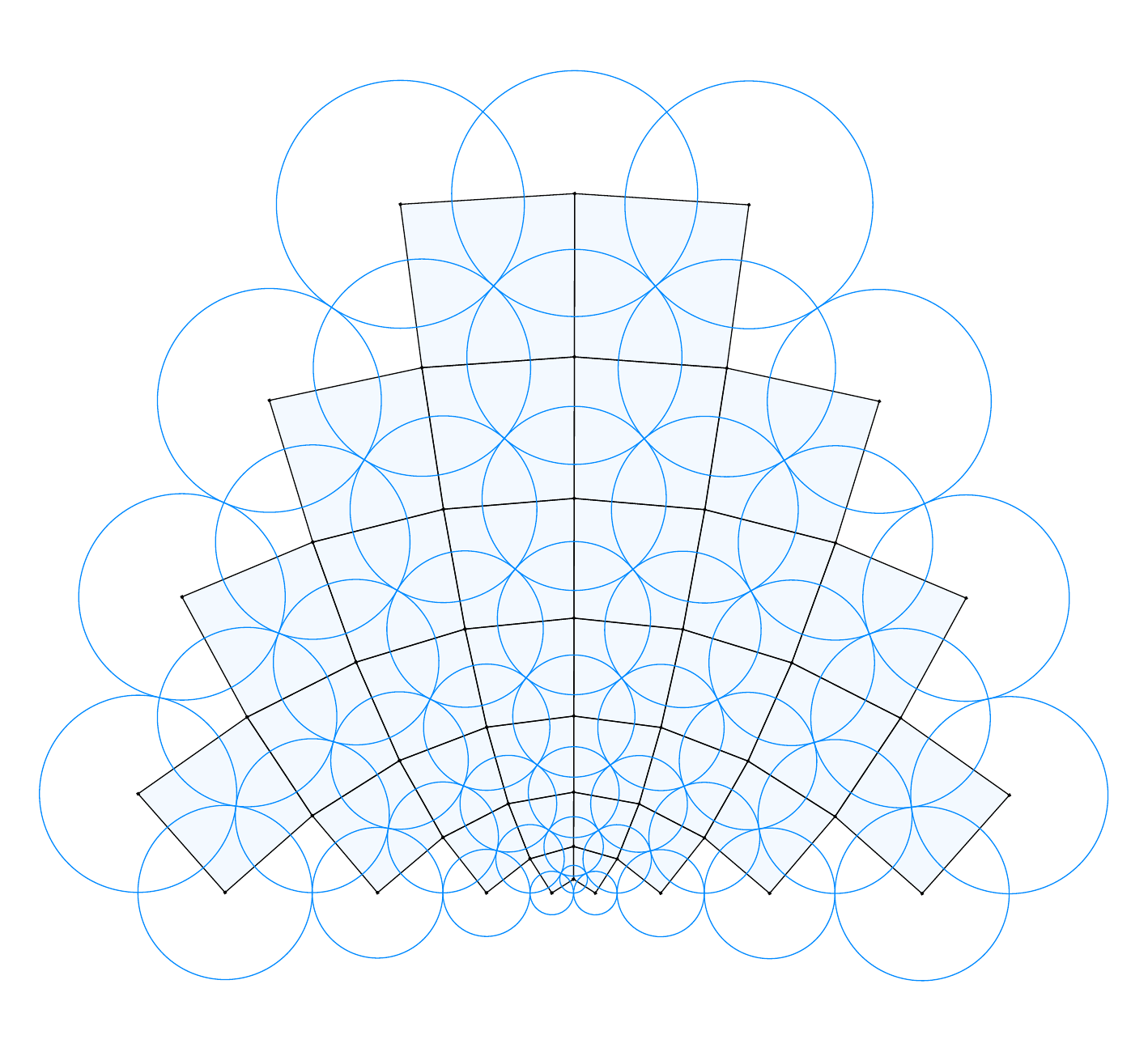}
  \includegraphics[width=.48\linewidth]{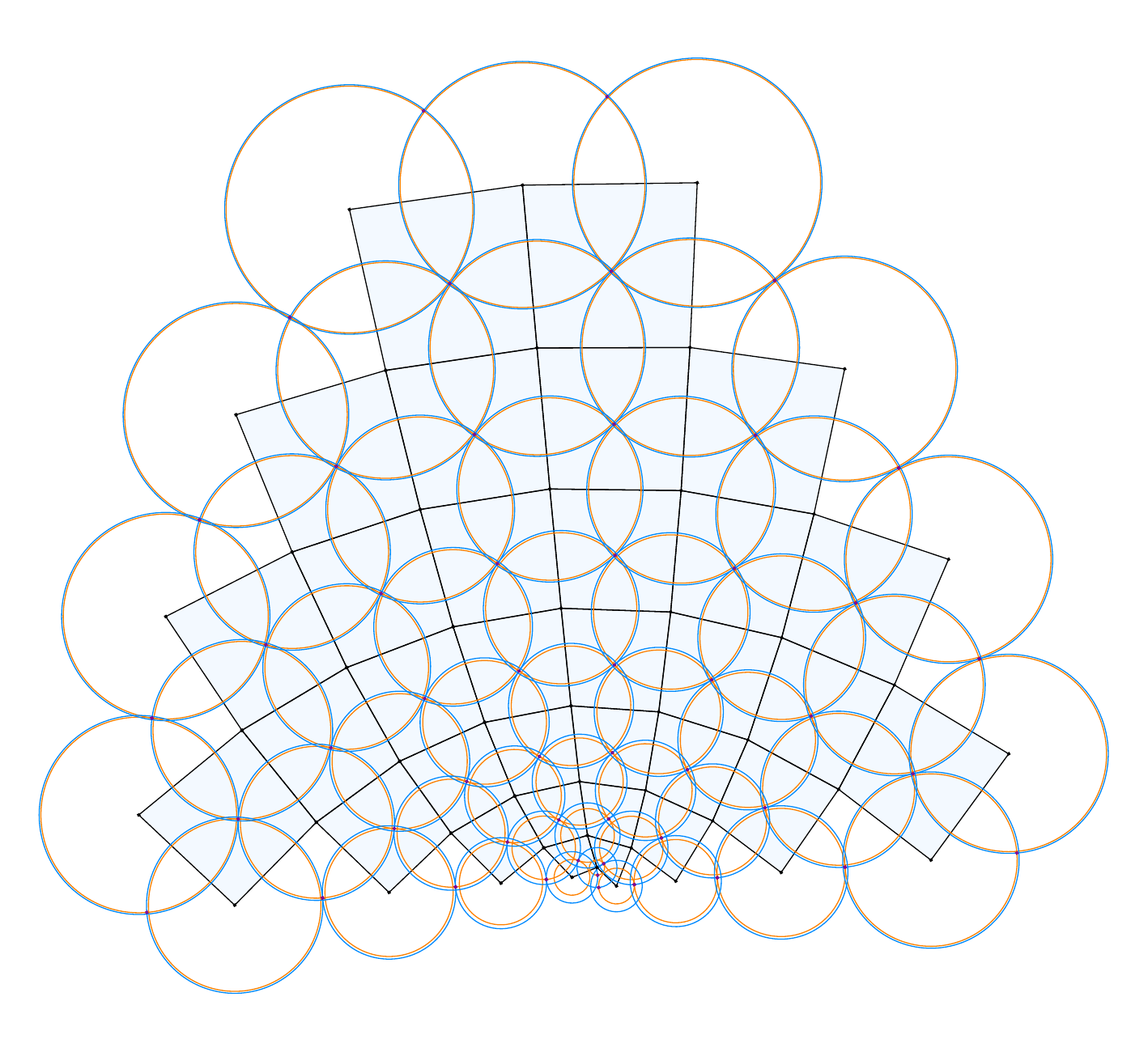}

  \includegraphics[width=.48\linewidth]{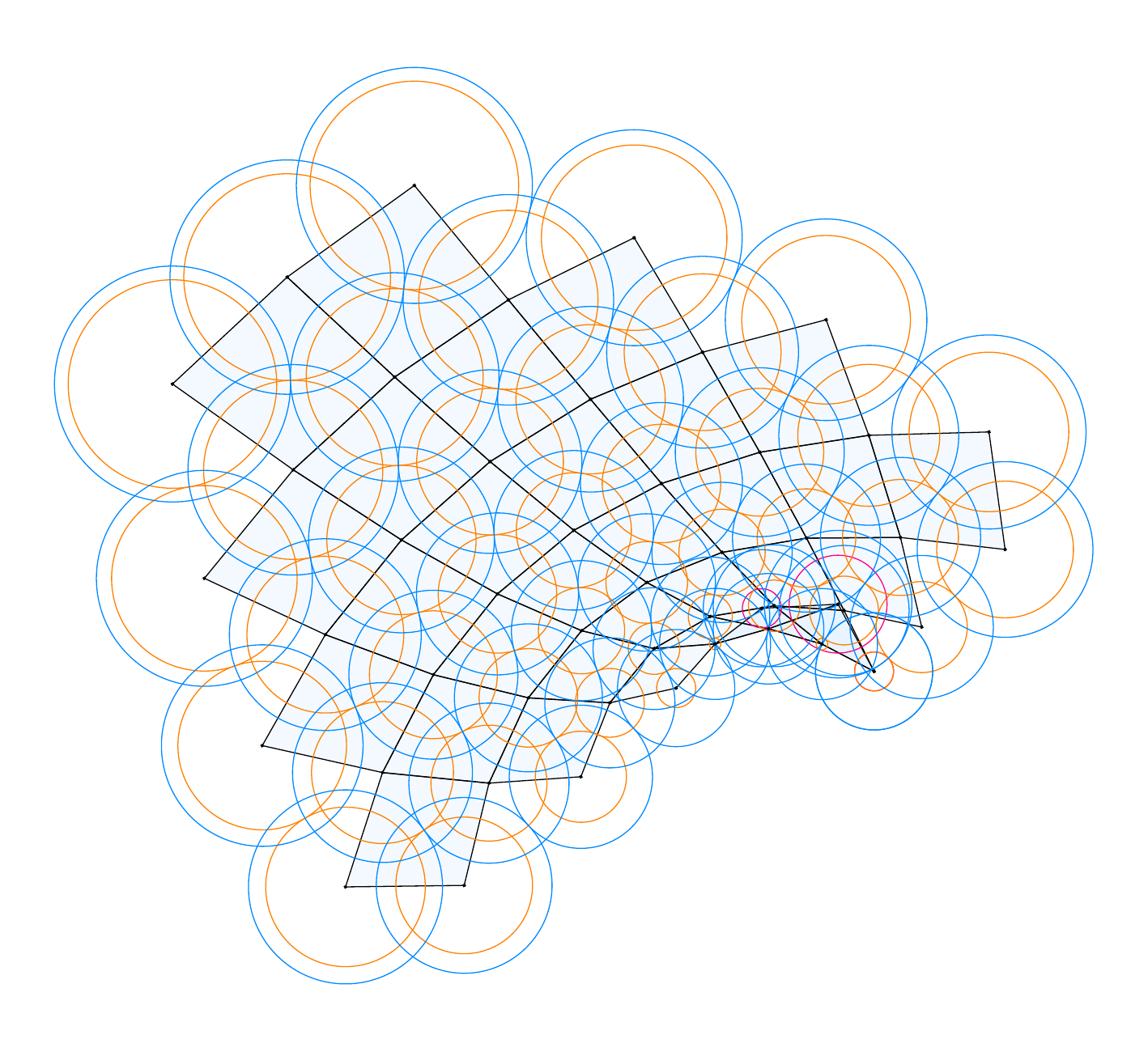}
  \includegraphics[width=.48\linewidth]{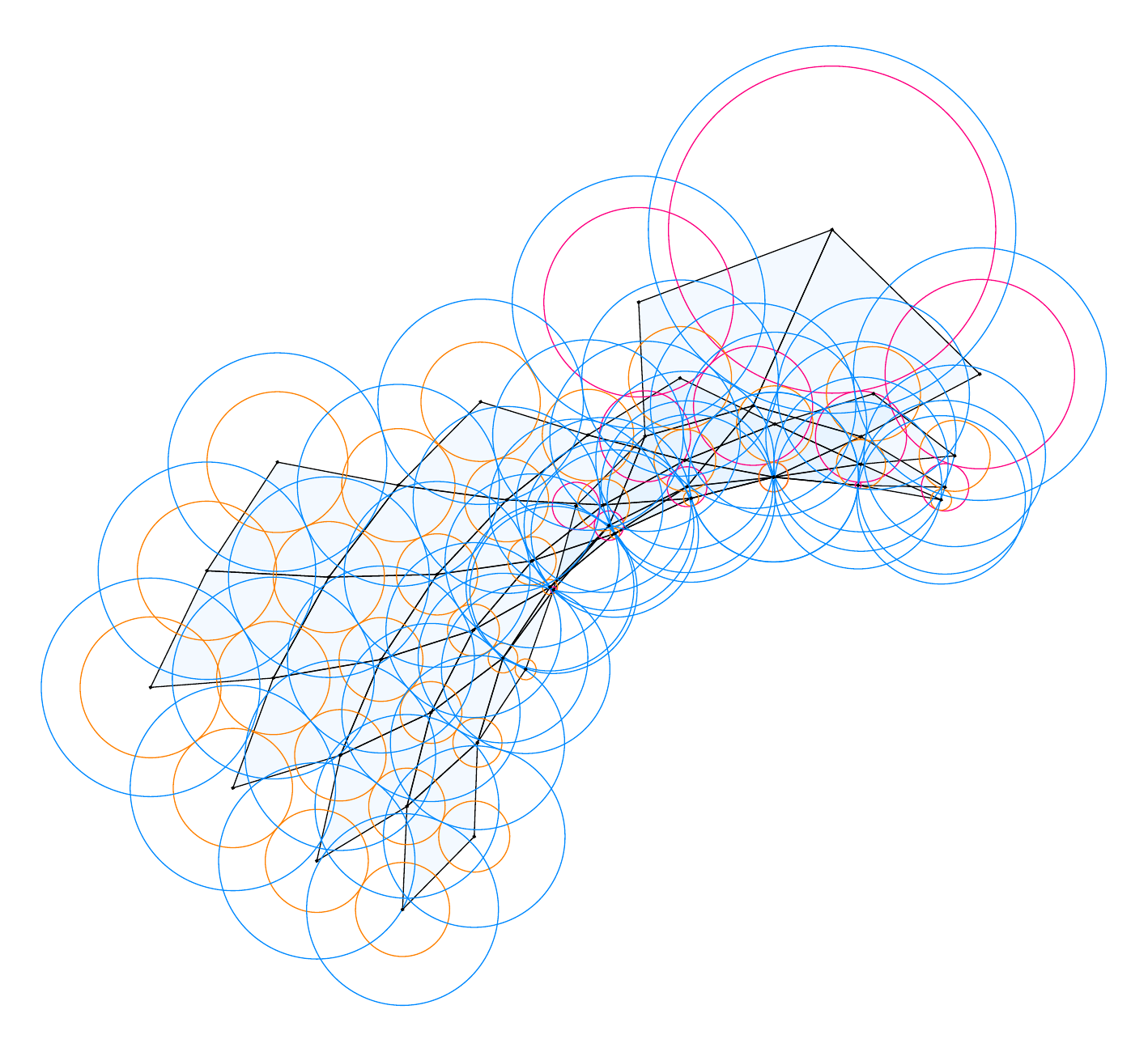}

  \includegraphics[width=.48\linewidth]{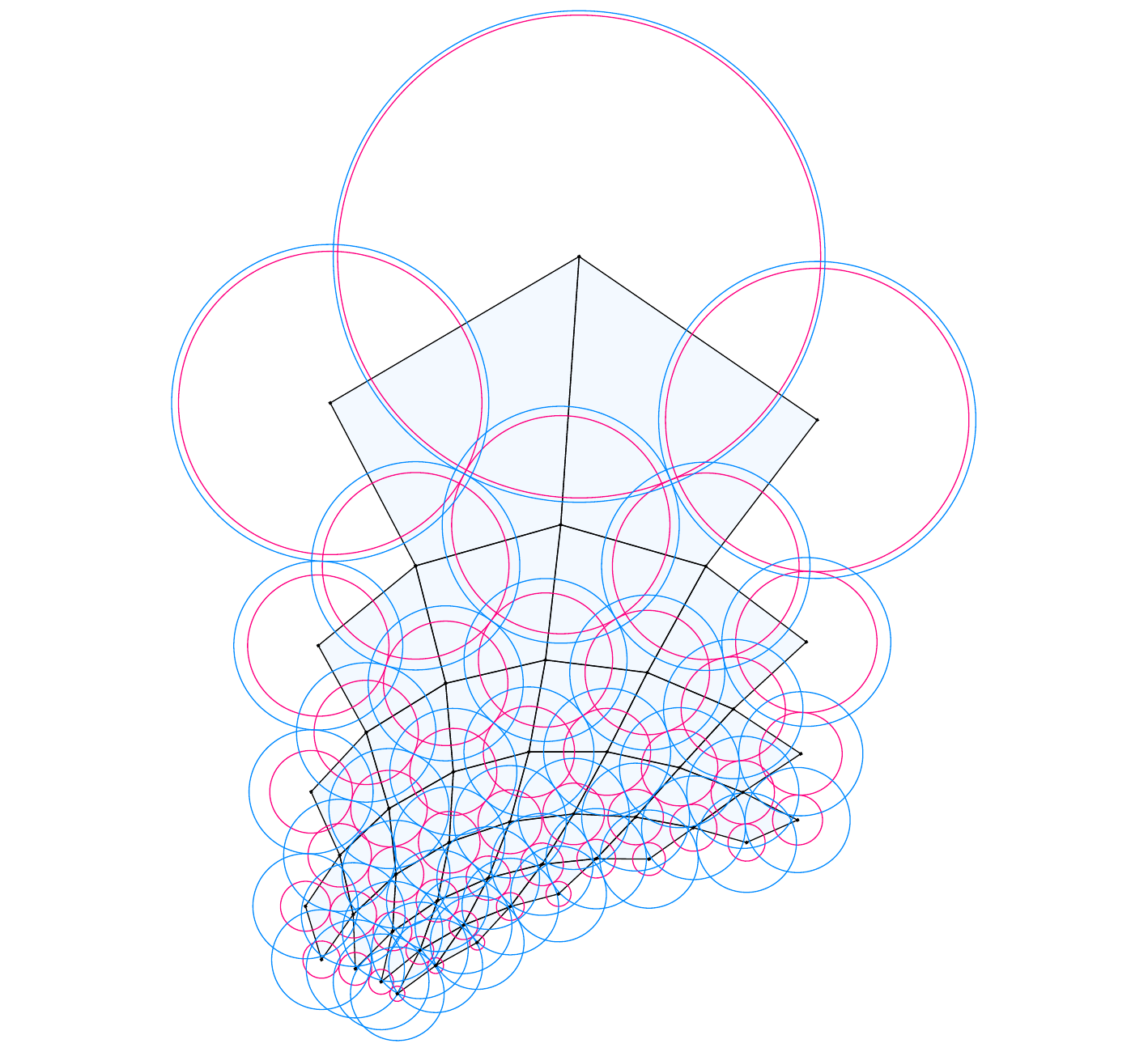}
  \includegraphics[width=.48\linewidth]{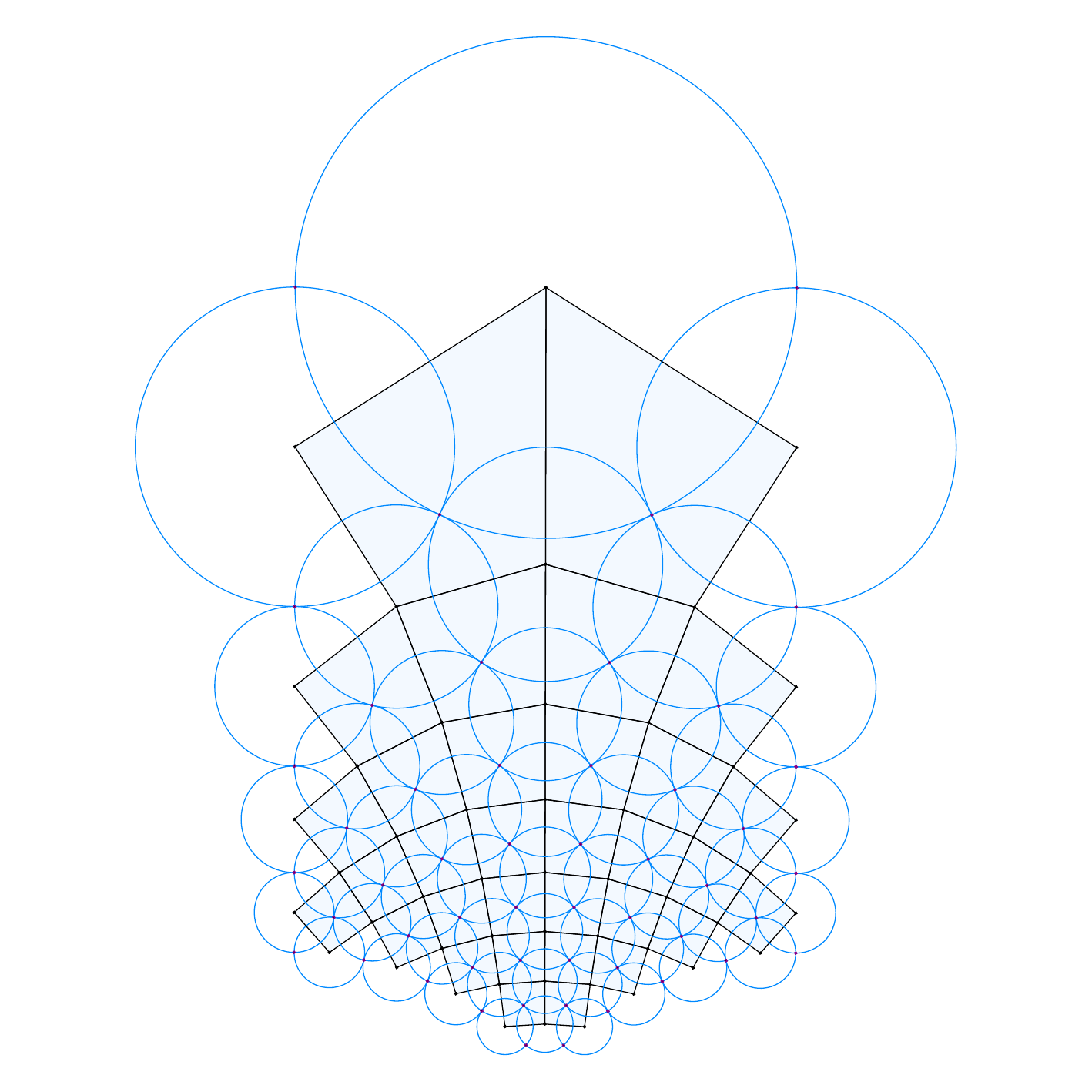}
  \caption{Orthogonal ring patterns interpolating between the circle patterns
  for $z^2$ and its dual pattern for $\log z$.}%
  \label{fig:z2_log}
\end{figure}
\endgroup

\section{Variational description}
\label{sec:variational-description}

The construction of a ring pattern is very similar to the construction of an
orthogonal circle pattern since the equations at the interior vertices are the
same (see Thm.~\ref{thm:ring_pattern}).  For (not necessarily orthogonal)
circle patterns there exists a convex variational
principle~\cite{bobenko_springborn_variational}. 
%In this section we will consider subcomplex~$G$ of~$\Z^2$ whose boundary
%consists of zigzag edges only as shown in Fig.~\ref{fig:variational}.  
For planar orthogonal circle patterns the functional is given in terms of the logarithmic
radii by:
\newcommand{\imli}{\operatorname{Im}\operatorname{Li}_2}
\begin{multline*}
  S(\rho) =  
  \sum_{v_i \edge v_j} 
  \left(
    \imli(i e^{\rho_j - \rho_i}) + 
    \imli(i e^{\rho_i - \rho_j}) -
    \frac{\pi}{2}(\rho_i + \rho_j)
  \right)
  +
  \sum_{v_i}\Phi_i \rho_i,
\end{multline*}
where the first sum is taken over all edges and
the second sum over all vertices of~$G$, 
$\operatorname{Li}_2$ is the dilogarithm
function,  $\imli(i e^x)=\int_{-\infty}^x \arctan e^u du$. 

This functional is invariant with respect to the shift 
\begin{equation}
\label{eq:shift}
\rho_i\to\rho_i +h,\quad \forall i
\end{equation} 
if and only if
\begin{equation}
\label{eq:shift_invariance}
\sum_i \Phi_i=\pi |E(G)|,
\end{equation}
where $|E(G)|$ is the number of edges of $G$. 
The critical points are given by
\begin{equation}
\label{eq:critical}
\frac{\partial S}{\partial \rho_i}=\Phi_i+\sum_{j: v_j{\edge}v_i}\left( 2\arctan (e^{\rho_i-\rho_j})-\pi\right) =0.
\end{equation}
The second derivative 
$$
D^2 S=\sum_{v_i \edge v_j} \frac{1}{\cosh(\rho_i-\rho_j)}(d\rho_i-d\rho_j)^2
$$
is positive for all variations different from (\ref{eq:shift}).

Denote by  $V_B$ the set of boundary vertices of $G$, i.e. the vertices with less then four neighbors. 
For simplicity consider ring patterns with positively oriented rings for all boundary vertices, i.e. on $V_B$ the function $\rho$ 
takes positive values. 
Equations (\ref{eq:critical}) with
\begin{equation}
  \label{eq:neumann}
    \Phi_{i} = 
    \begin{cases}
      2\pi  & \text{for interior vertices}\\
      \Theta_i  & \text{for (positively oriented) boundary vertices}.
    \end{cases}
\end{equation} 
coincide with the orthogonal ring patterns equations (\ref{eq:closure}).
%
%To determine a solution we can either prescribe \emph{Dirichlet boundary
%conditions} by prescribing the $\rho$'s for the boundary rings or
%\emph{Neumann boundary conditions} by specifying the angles $\Theta_i$ for 
%the boundary vertices. 

\begin{proposition} 
Orthogonal ring patterns can be obtained as solutions of the following boundary valued problems:
\begin{itemize}
\item (Dirichlet boundary conditions)
For any choice of prescribed radii $\rho:V_B\to {\mathbb R}_+$ of boundary rings there exists a unique orthogonal 
ring pattern $\mathcal R$.
\item (Neumann boundary conditions) For any choice of  boundary cone angles $\Theta:V_B\to {\mathbb R}_+$ 
satisfying (\ref{eq:shift_invariance}) there exists a one parameter family of orthogonal ring patterns ${\mathcal R}_h$. 
The parameter $h$ is given by the shift (\ref{eq:shift}). 
There exists $h_0$ such that for all $h>h_0$ all boundary rings of the ring pattern ${\mathcal R}_h$ are positively oriented.  
\end{itemize}
\end{proposition}
\begin{proof}
The existence and uniqueness of the boundary valued problems for orthogonal ring patterns can be treated exactly in the same way 
as for circle patterns. The later problems in a more general case were investigated in \cite{bobenko_springborn_variational}.  
The existence and uniqueness for ring patterns follow from the convexity of the functional  $S(\rho)$, for all variations different from (\ref{eq:shift}). This also gives a way to compute the ring patterns by minimizing the functional. For the Neumann boundary valued problem one varies $\rho$'s at all vertices $V(G)$. The condition (\ref{eq:shift_invariance}) implies that the solutions possess the symmetry group (\ref{eq:shift}) described in Section~\ref{sec:circle_patterns}. 
\end{proof}  

An example of solution of a Neumann boundary value problem is presented in Fig.~\ref{fig:variational}). 
Here for all interior vertices $\Phi=2\pi$, for all boundary and not corner vertices $\Phi_i=\Theta_i=\pi$. 
Four angles $\Phi_i=\Theta_i$ at corner vertices of the quadrilateral should sum up to $2\pi$. 
One can easily check that the last condition implies (\ref{eq:shift_invariance}).

\begin{figure}[t]
  \centering
  \includegraphics[width=.9\linewidth]{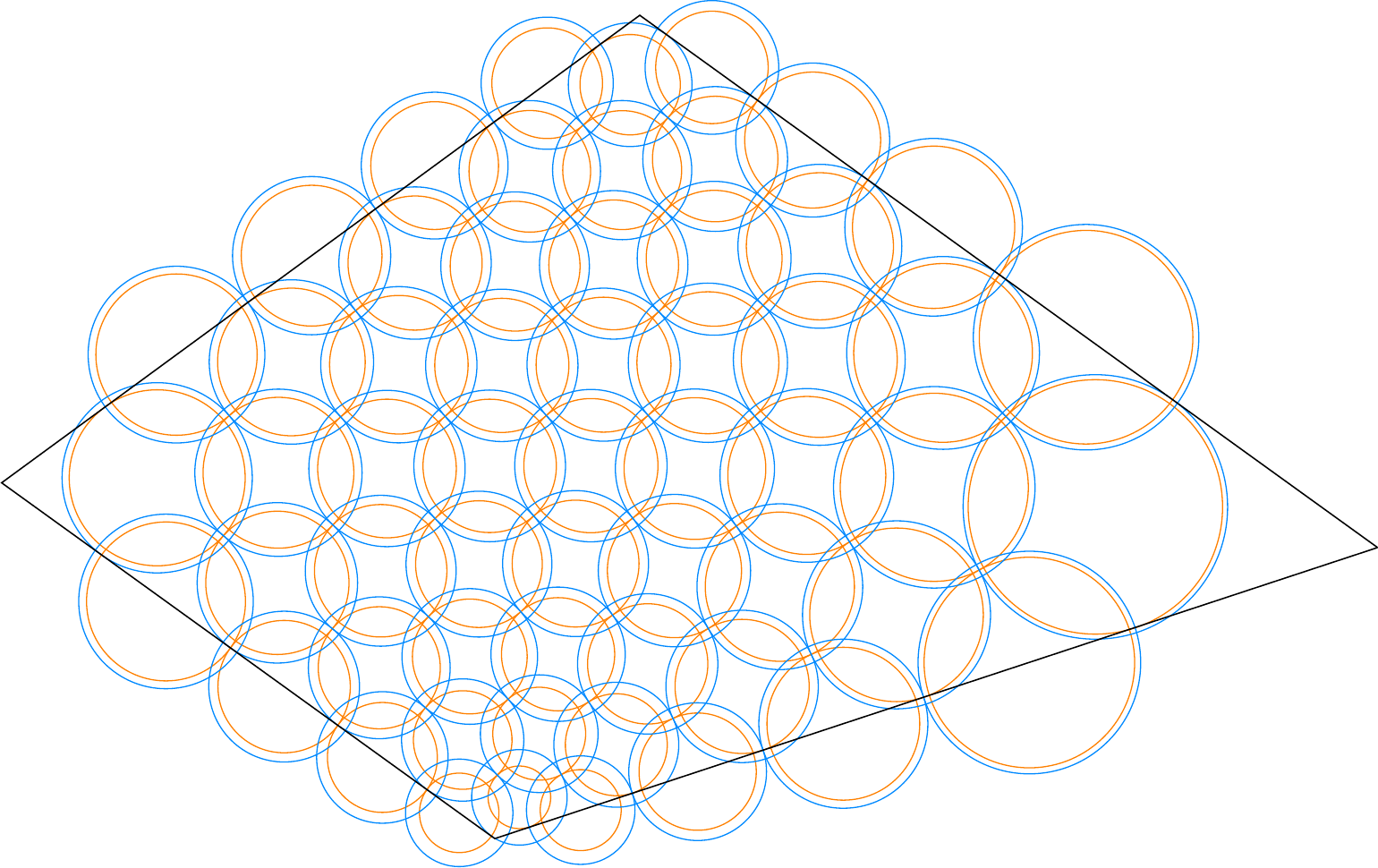}
  \caption{An orthogonal ring pattern computed using the variational
    principle with Neumann boundary conditions. The prescribed angles are
    $\pi$ for the boundary vertices of degree $2$. The shape is governed by the 
    four angles, with the sum $2\pi$, prescribed for the
  four corner boundary vertices of degree~$1$.}%
  \label{fig:variational} 
\end{figure}

\subsection*{Acknowledgements}
We thank Boris Springborn for fruitful discussions on circle patterns and 
variational principles and Nina Smeenk for the support in developing the
software for creating the figures. We also thank the anonymous referee for 
valuable suggestions which improved the presentation.

\bibliographystyle{abbrv}
\bibliography{main}

\end{document}